\documentclass[12pt]{amsart}
\usepackage[margin=1in]{geometry}
\usepackage[english]{babel}
\usepackage[utf8]{inputenc}
\usepackage{subcaption}
\usepackage{amsmath}
\usepackage{amssymb}
\usepackage{amsfonts}
\usepackage{amsthm}
\usepackage{mathrsfs}
\usepackage[all]{xy}
\usepackage{graphicx}
\usepackage{color}
\usepackage{cite}
\usepackage{url}
\usepackage[labelfont=bf,labelsep=period,justification=raggedright]{caption}
\usepackage[english]{babel}
\usepackage[utf8]{inputenc}
\usepackage{hyperref}
\usepackage[colorinlistoftodos]{todonotes}
\usepackage{tkz-fct}
\usepackage{tikz}
\usepackage{tabularx}
\usetikzlibrary{calc}

\renewcommand{\qedsymbol}{$\blacksquare$}

\theoremstyle{theorem}
\newtheorem{theorem}{Theorem}

\theoremstyle{definition}
\newtheorem*{definition}{Definition}

\newtheorem{conjecture}[theorem]{Conjecture}
\newtheorem{corollary}[theorem]{Corollary}
\newtheorem{proposition}[theorem]{Proposition}
\newtheorem{lemma}[theorem]{Lemma}
\newtheorem{example}{Example}

\newtheorem{algorithm}{Algorithm}

\theoremstyle{plain}

\begin{document}

\title{Diophantine \emph{m}-tuples of Triangular Numbers}
\author{Sounak Bagchi, Christian Zhou-Zheng}
\maketitle
\begin{abstract}
    
A $m$-tuple with the property $D(n)$ is a tuple of $m$ positive integers $(a_1, a_2, \dots, a_m)$ such that  $a_i a_j + n$ is an square, for $1 \le i < j \le m$. The $k$th triangular number is $T_k = \frac{k(k+1)}{2}$ for nonnegative integers $k$. We consider $D(1)$ tuples consisting only of triangular numbers. We prove the nonexistence of any $D(1)$ triangular quadruple and describe an algorithm to generate an infinite family of $D(1)$ triangular triples, which we conjecture contains all $D(1)$ triangular triples.
\\

We then consider general $D(n)$ tuples. To aid with computational difficulties, we present an efficient algorithm, using Generalized Pell Equations (GPEs), to determine whether $T_a$ is in a $D(n)$ triangular pair, which runs in $O(a^{1/2})$ time. We then prove that no $D(n)$ triangular pair exists for $n \equiv 2,5 \text{ (mod } 9\text{)}$, and discuss other values of $n$ for which there appear to be no $D(n)$ triangular pairs. We also show that our $D(n)$ equation has solutions in all $\mathbb{Q}_p$, for $p \neq 3$. We then present progress on determining a general criteria on $n$ for which no $D(n)$ triangular pairs exist.
\end{abstract}

\section{Introduction} \label{prelims}
\begin{definition}
    A \textbf{$D(n)$ Diophantine $m$-tuple of rationals} is a set of $m$ positive rationals such that the product of any two distinct elements of the set is $n$ less than a rational perfect square. If all rationals in the tuple are integers, then we say it is a $D(n)$ Diophantine $m$-tuple of integers.
\end{definition}

The question of what numbers form a $D(1)$ $m$-tuple dates back to Diophantus, who found a rational $D(1)$ $4$-tuple, or quadruple, given by $$\left\{\frac{1}{16}, \frac{33}{16}, \frac{17}{4}, \frac{105}{16} \right\}.$$ Pierre de Fermat was first to find a $D(1)$ integer quadruple, given by
$$\{1, 3, 8, 120 \}.$$
It was Euler \cite{b4} who found a rational extension of this quadruple to the set $$\left\{1, 3, 8, 120, \frac{777840}{8288641} \right\}.$$

The case of Diophantine $m$-tuples for $D(1)$ is of particular interest to many mathematicians studying this topic. For example, Dujella proved in \cite{b5} that every $D(1)$ integer quadruple could be extended to a $D(1)$ rational quintuple. More recently, a longstanding conjecture on the nonexistence of an integer $D(1)$ Diophantine quintuple was proven in \cite{b6}:

\begin{theorem}[He, Togb\'{e}, Ziegler]\label{no quintuple}
    There does not exist an integer $D(1)$ Diophantine quintuple.
\end{theorem}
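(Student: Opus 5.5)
Proving Theorem~\ref{no quintuple} is a major research result, so I would not try to re-derive it here; the realistic plan is to follow the strategy of He, Togb\'{e} and Ziegler. Suppose $\{a,b,c,d,e\}$ with $a<b<c<d<e$ is a $D(1)$ quintuple. The first step uses regularity: by the Arkin--Hoggatt--Straus construction, any $D(1)$ triple $\{a,b,c\}$ extends to a \emph{regular} quadruple with
\[
d_{\pm}=a+b+c+2abc\pm 2rst,
\qquad ab+1=r^2,\; ac+1=s^2,\; bc+1=t^2 .
\]
The key reduction is then to show that in any quintuple the element $e$ must be related to $\{a,b,c,d\}$ so rigidly that $e$ (or $d$) is forced to equal $d_{+}$. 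This is impossible for both simultaneously, since $d\neq e$.

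To get that reduction, I would express the extension problem for $\{a,b,c,d\}$ by $e$ as a system of simultaneous Pell equations,
\[
az^2-cx^2=a-c,\qquad bz^2-cy^2=b-c,
\]
and parametrize the solutions as $z=v_m=w_n$ with binary recurrences. The steps, in order, are:
\begin{itemize}
\item[(i)] A congruence argument modulo $c^2$ (or $4c$) on $v_m$ and $w_n$. This forces $m\equiv n$ modulo small numbers and yields a \emph{lower} bound on $m$ in terms of $c$, roughly $m\gg c^{1/2}/b^{1/2}$, unless the solution is the trivial one giving $d_{\pm}$.
\item[(ii)] Gap principles for quintuples, for instance $d>4abc$ and $e>$ a power of $d$. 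These come from Dujella's results showing a quintuple must contain a regular quadruple.
\item[(iii)] Baker's theory of linear forms in three logarithms (Matveev's bound). This yields an \emph{upper} bound for $m$ of order $\log c\cdot\log m$.
\end{itemize}
Combining (i) and (iii) bounds $c$ absolutely, and hence bounds $a$, $b$ and $d$.

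The hardest part is closing the remaining finite range. Dujella's original argument already gives finiteness with an astronomically large bound (about $10^{1930}$ quintuples), and the difficulty is eliminating every case. Here I would follow He--Togb\'{e}--Ziegler:
\begin{itemize}
\item sharpen the linear-forms estimate using two-logarithm bounds of Laurent in the case where one parameter is small;
\item treat separately the cases $b<2a$, $2a\le b\le 8a$ and $b>8a$, using refined relations among $a$, $b$ and $c$;
\item finish with Baker--Davenport reduction, via the Dujella--Peth\H{o} lemma, on the surviving ranges.
\end{itemize}
I expect this final case analysis, which keeps the bound on $m$ small enough that the reduction terminates, to be the main obstacle. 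In the paper itself, the theorem would simply be cited from \cite{b6} rather than proved.
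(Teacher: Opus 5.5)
Your proposal agrees with the paper, which does not prove this theorem but simply cites it from He, Togb\'{e} and Ziegler \cite{b6}; your outline of their strategy is a reasonable high-level summary, though nothing beyond the citation is needed. One small correction: the result that every Diophantine quintuple contains a regular quadruple (so that $d=d_{+}$) is due to Fujita, not Dujella.
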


Theorem \ref{no quintuple} will be useful in our next setction. In this paper, we examine Diophantine m-tuples through a class of numbers:

\begin{definition}
    The sum of the first $n$ positive integers is known as the $n$\textbf{-th triangular number}, given by $T_n = \frac{n(n+1)}{2}$.
\end{definition} 

Throughout this paper, we will only consider Diophantine integer $m$-tuples consisting solely of triangular numbers, which we refer to as \emph{triangular $D(n)$ tuples.} We focus primarily on the cases $D(n)$ for positive $n$, though our results can be extended naturally to negative $n$. Immediately, Fermat's integer quadruple yields a $D(1)$ triangular triple:

\begin{example}\label{eg:tfermat}
    The subtriple $\{1, 3, 120 \}$ of the integer $D(1)$ quadruple $\{1, 3, 8, 120 \}$ is a $D(1)$ triple consisting solely of triangular numbers, since $1 = T_1, 3 = T_2,$ and $120 = T_{15}$. We can check that $1 \cdot 3 + 1 = 2^2$, $1 \cdot 120 + 1 = 11^2$, and $3 \cdot 120 + 1 = 19^2$.
\end{example}

Going forward, we will omit the term ``Diophantine" when describing $D(n)$ Diophantine $m$-tuples. We also refer to $\mathbb{Z} / m \mathbb{Z}$ as the ring of integers modulo $m$, and $\mathbb{Z}_p$ as the $p$-adic integers.

\section{$D(1)$ Triangular Tuples} \label{$D(1)$}

$D(1)$ was historically the first condition to be studied, and has the greatest coverage in the literature; it is the natural first condition to study in the context of triangular $D(n)$ tuples. In particular, classifying Diophantine $m$-tuples for different values of $m$ will be of interest to us. We first present two trivial proofs:

\begin{proposition}\label{prop:infpair}
    Infinitely many triangular $D(1)$ pairs exist. 
\end{proposition}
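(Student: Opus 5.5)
Our plan is to give an explicit infinite family. The simplest choice pairs $T_1 = 1$ with a second triangular number $T_k$. The condition $T_1 T_k + 1 = T_k + 1$ being a square asks for infinitely many $k$ with $\frac{k(k+1)}{2} + 1 = y^2$. Multiplying by $8$ gives $(2k+1)^2 + 7 = 8y^2$, a generalized Pell equation, which we would rather avoid. Instead, we will use a pair of consecutive triangular numbers.

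For $k \ge 1$, consider the pair $\{T_{k-1}, T_{k+1}\}$ with $k \ge 2$. A direct computation gives
\begin{equation*}
T_{k-1}T_{k+1} + 1 = \frac{(k-1)k(k+1)(k+2)}{4} + 1 = \left(\frac{k^2+k-2}{2}\right)^2 + \frac{(k^2+k)\cdot 0}{1}\cdots
\end{equation*}
This line is only a sketch; the identity we actually rely on is the classical one,
\begin{equation*}
(k-1)k(k+1)(k+2) + 1 = (k^2+k-1)^2 .
\end{equation*}
Because that identity carries $+1$ rather than $+4$, we switch to an equivalent cleaner choice. We take $T_k$ and $T_{k+2}$ up to scaling, or, most simply, the pair $\{T_k, T_{k+1}\}$, and compute
\begin{equation*}
T_kT_{k+1} + 1 = \frac{k(k+1)^2(k+2)}{4} + 1 .
\end{equation*}
Writing $k(k+2) = (k+1)^2 - 1$ and setting $m = (k+1)^2$, this becomes $\frac{m(m-1)}{4} + 1$. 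That expression is not a perfect square in general, so this choice fails too. We therefore return to Pell.

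The robust route is Pell. We will show that $x^2 - 8y^2 = -7$ with $x = 2k+1$ has infinitely many solutions. The seed $(x,y) = (1,1)$, giving $k = 0$, is excluded because $T_0 = 0$ is not positive. The seed $(5,2)$ gives $k = 2$, that is, $T_2 = 3$ with $3 + 1 = 4$, which recovers $\{1,3\}$. Multiplying by powers of the fundamental unit $3 + \sqrt{8}$ of $x^2 - 8y^2 = 1$ produces infinitely many solutions
\begin{equation*}
x_j + y_j\sqrt 8 = (5 + 2\sqrt 8)(3 + \sqrt 8)^j .
\end{equation*}
Since $8y^2$ is even, $x_j^2 = 8y_j^2 - 7$ is odd, so each $x_j$ is odd and $k_j = (x_j - 1)/2$ is an integer. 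The $x_j$ are strictly increasing, so the $k_j$ are distinct positive integers.

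Each $\{1, T_{k_j}\}$ is then a triangular $D(1)$ pair, provided $T_{k_j} \ne 1$, which holds for $j \ge 0$. This gives infinitely many such pairs. For example, $j = 1$ gives $x = 31$, $k = 15$, and $T_{15} = 120$, matching Example~\ref{eg:tfermat}. The only step needing care is the parity and positivity check, which is routine.
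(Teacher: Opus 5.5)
Your final Pell argument is correct; the opening attempts are abandoned dead ends and do not affect it. It takes a different route from the paper, which uses no Pell theory. The paper verifies the polynomial identity $T_nT_{n+4}+1=\left(\frac{n^2+5n+2}{2}\right)^2$, so $\{T_n,T_{n+4}\}$ is a $D(1)$ pair for every $n\ge 1$.

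This is exactly the fixed-shift family you were looking for in your first two attempts. Write $T_nT_{n+d}=\frac{1}{4}u(u+d)$ with $u=n(n+d+1)$. Then $u(u+d)+4=\left(u+\frac{d}{2}\right)^2+4-\frac{d^2}{4}$, which is the square of a polynomial in $n$ only when $d=4$. That is why your shifts $d=2$ (giving $(u+1)^2+3$) and $d=1$ failed.

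What each approach buys:
\begin{itemize}
\item The paper's route is a one-line check. It gives a pair at every index, and it feeds directly into the triple family $\{T_n,T_{n+4},T_{4n^2+20n+8}\}$ via Euler's extension with $a=8$.
\item Your route fixes $T_1=1$ and takes the orbit of $(5,2)$ under $3+\sqrt8$ on $x^2-8y^2=-7$. It relies on multiplicativity of the norm in $\mathbb{Z}[\sqrt8]$, which you use implicitly. It yields only an exponentially sparse family ($k=2,15,90,\dots$). In exchange, it shows something the paper's family does not: a single triangular number lies in infinitely many triangular $D(1)$ pairs.
\end{itemize}

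Your method is in fact the paper's later generalized Pell equation $x^2-a(a+1)y^2=16n-a(a+1)$ specialized to $a=n=1$. The two ideas combine: take $\{T_a,T_{a+4}\}$ as a seed, then multiply by $2a+1+2\sqrt{a(a+1)}$, which preserves $x\equiv 0\pmod 4$ and $y$ odd. This shows every $T_a$ lies in infinitely many such pairs.

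In a write-up, delete the abandoned attempts, especially the garbled display ending in $\cdots$. Also justify monotonicity explicitly via $x_{j+1}=3x_j+8y_j$, $y_{j+1}=x_j+3y_j$ with a positive seed.
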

\begin{proof}
    The tuple $\{T_n,T_{n+4}\}$ is a $D(1)$ pair for all $n$: \begin{align*}
        T_n\cdot T_{n+4}+1&=\frac{n(n+1)}{2}\cdot\frac{(n+4)(n+5)}{2}+1\\&=\frac{n^4+10n^3+29n^2+20n+4}{4}\\&=\left(\frac{n^2+5n+2}{2}\right)^2.\qedhere
    \end{align*}
\end{proof}

\begin{proposition}
    There exists no triangular $D(1)$ quadruple.
\end{proposition}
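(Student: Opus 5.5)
The plan is to reduce the statement directly to Theorem \ref{no quintuple} by adjoining one fixed integer to a hypothetical triangular quadruple. The key observation is the classical characterization of triangular numbers: a positive integer $t$ is triangular exactly when $8t+1$ is a perfect square. In one direction, for $t=T_k$ we compute
\[
8T_k+1 = 4k(k+1)+1 = (2k+1)^2 .
\]
Only this direction is needed. Read as a $D(1)$ condition, it says that the product $8\cdot T_k$ is one less than a square. So the integer $8$ behaves like a $D(1)$ partner of every triangular number simultaneously.

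Suppose, for contradiction, that $\{a,b,c,d\}$ is a triangular $D(1)$ quadruple, so $a,b,c,d$ are distinct positive triangular numbers and the product of any two of them, plus $1$, is a square. I would form the set $\{a,b,c,d,8\}$ and check that it is an integer $D(1)$ quintuple:
\begin{itemize}
\item The six products among $a,b,c,d$ satisfy the $D(1)$ condition by hypothesis.
\item Each of the four products $8a+1$, $8b+1$, $8c+1$, $8d+1$ is an odd square by the identity above.
\end{itemize}

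The only point needing care is that the five elements are distinct, since a Diophantine tuple is a set of distinct positive integers. This reduces to observing that $8$ is not triangular: $T_3=6$ and $T_4=10$, and the $T_k$ are strictly increasing. Equivalently, $8\cdot 8+1=65$ is not a square. Hence $8\notin\{a,b,c,d\}$, and $\{a,b,c,d,8\}$ is a genuine integer $D(1)$ Diophantine quintuple. This contradicts Theorem \ref{no quintuple}, which proves the proposition.

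I do not expect a real obstacle. The whole argument rests on spotting that $8$ extends every triangular number, after which the deep input is entirely contained in the He--Togb\'{e}--Ziegler theorem. The only thing to double-check is the distinctness of the adjoined element $8$, which the computation above settles.
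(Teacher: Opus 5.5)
Your proposal is correct and is essentially the paper's proof: adjoin $8$ to the hypothetical quadruple, use $8T_k+1=(2k+1)^2$, note that $8$ is not triangular so the five elements are distinct, and contradict Theorem \ref{no quintuple}. Your explicit distinctness check matches the paper's remark that $8$ is not a triangular number.
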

\begin{proof}
    Suppose, for the sake of contradiction, that there existed a triangular $D(1)$ quadruple $\{T_a,T_b,T_c,T_d\}$. Notice that \[T_a\cdot 8+1=\frac{a(a+1)}{2}\cdot 8+1=4a^2+4a+1=(2a+1)^2\] (resp. $T_b,T_c,T_d$). 8 is not a triangular number and therefore is not equal to $T_a, T_b, T_c, T_d$, so $\{T_a,T_b,T_c,T_d,8\}$ would be an integer $D(1)$ quintuple, contradicting Theorem \ref{no quintuple}.
\end{proof}

Therefore we have at least an infinite family of pairs and knowledge that no quadruples exist. It then naturally follows to ask what results we may find on triangular $D(1)$ triples. Indeed, there exists an infinite family of triples that is closely related to the family of pairs.

\begin{proposition}\label{prop:inftrip}
    There exist infinitely many triangular $D(1)$ triples.
\end{proposition}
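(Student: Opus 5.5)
The plan is to build the infinite family of triples on top of the infinite family of pairs $\{T_n,T_{n+4}\}$ from Proposition \ref{prop:infpair}, by adjoining a third triangular number that depends polynomially on $n$. The natural first attempt is the \emph{regular} extension: if $ab+1=r^2$, then $c=a+b\pm 2r$ satisfies $ac+1=(a\pm r)^2$ and $bc+1=(b\pm r)^2$. Here $a=T_n$, $b=T_{n+4}$ and $r=\tfrac{n^2+5n+2}{2}$. The minus sign gives $c=8$, which is not triangular (it is exactly the $8$ used in the previous proposition). The plus sign gives $c=2(n+2)(n+3)$, which is not triangular in general. So the direct extension of this pair fails, and the third element has to be found differently.

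My approach will be a polynomial ansatz. I look for a family $\{T_{f(n)},T_{g(n)},T_{h(n)}\}$ with $f,g,h$ integer-valued polynomials of low degree (linear $f,g$ and quadratic $h$ first). I require the three quartic or higher polynomials $T_fT_g+1$, $T_fT_h+1$, $T_gT_h+1$ to be perfect squares of polynomials in $\mathbb{Q}[n]$, and solve for the coefficients by matching leading coefficients and then lower ones. The computation behind Proposition \ref{prop:infpair} is exactly the case where $f,g$ are linear. To calibrate the ansatz, I will use small known triples such as $\{1,3,120\}=\{T_1,T_2,T_{15}\}$ from Example \ref{eg:tfermat}, together with any further triples found by a short computer search. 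The family should interpolate these, and fitting $f,g,h$ through them pins down the candidate polynomials. Once a candidate is found, checking it is a routine polynomial identity: write each product plus one as $\bigl(P(n)/2\bigr)^2$ or $\bigl(P(n)/4\bigr)^2$ and expand. I also check that $T_fT_h+1$ and $T_gT_h+1$ are squares of integers, not just of rationals, for all $n$ in an infinite arithmetic progression. The three values must be distinct and positive there, which gives infinitely many triples.

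If no low-degree polynomial family exists, the fallback is a Pell-equation construction. Fix a triangular pair $\{T_x,T_y\}$, for instance $\{1,3\}$. The condition that $c$ completes it to a triple reduces to the simultaneous system $T_xc+1=u^2$, $T_yc+1=v^2$, whose eliminant $T_yu^2-T_xv^2=T_y-T_x$ is a generalized Pell equation with infinitely many solutions generated recursively. I would then impose $8c+1=\square$, so that $c$ is triangular, along a subsequence, using periodicity of the Pell recursion modulo small numbers to show this happens infinitely often. The main obstacle is precisely this step: finding a third element that is simultaneously triangular and compatible with both members of the pair. Triangularity is one extra quadratic condition, and generically it cuts the family down to finitely many points (a genus-one condition). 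So whichever route works must exploit some special structure, either a polynomial identity or a recurrence preserving triangularity. Finding that structure is where I expect to spend the effort.
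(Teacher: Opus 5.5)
Your proposal is a search plan, not a proof. The statement is purely existential, so all of its content lies in exhibiting an explicit infinite family and verifying it, and you never produce one.

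Your ansatz ($f,g$ linear, $h$ quadratic) has the right shape. But nothing you write shows that the coefficient matching will succeed. Calibrating on $\{T_1,T_2,T_{15}\}$ is also ambiguous, because $T_k=T_{-k-1}$: that triple is really the $n=-3$ member $\{T_{-3},T_1,T_{-16}\}$ of the family below.

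Your fallback cannot work at all. For a fixed pair such as $\{1,3\}$, requiring $c+1$, $3c+1$ and $8c+1$ to be squares means exactly that $\{1,3,8,c\}$ is a $D(1)$ quadruple. Baker and Davenport proved that $c=120$ is the only positive such $c$. For any fixed pair you are counting integral points on a genus-one curve, and by Siegel's theorem there are only finitely many. Moreover, periodicity of a Pell recurrence modulo small numbers can only certify congruence conditions, never that $8c+1$ is actually a perfect square.

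The missing idea is one you came close to. Since $8T_k+1=(2k+1)^2$, a positive integer $c$ is triangular exactly when $\{8,c\}$ is a $D(1)$ pair. So a triangular $D(1)$ triple is the same thing as a $D(1)$ quadruple containing $8$. You noticed that the minus-sign extension of $\{T_n,T_{n+4}\}$ returns $8$, i.e.\ that $\{8,T_n,T_{n+4}\}$ is a $D(1)$ triple; the right move is to extend this \emph{triple}.

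Euler's construction with $a=8$, $b=T_n$, $r=2n+1$ gives the quadruple $\{8,\,T_n,\,a+b+2r,\,4r(r+a)(r+b)\}=\{8,\,T_n,\,T_{n+4},\,2(2n+1)(2n+9)(n^2+5n+2)\}$. The last element forms a $D(1)$ pair with $8$, so it is automatically triangular; in fact it equals $T_{4n^2+20n+8}$. Dropping $8$ yields the paper's family $\{T_n,T_{n+4},T_{4n^2+20n+8}\}$.

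The paper's proof is just the two identities $T_nT_{4n^2+20n+8}+1=(2n^3+11n^2+9n+1)^2$ and $T_{n+4}T_{4n^2+20n+8}+1=(2n^3+19n^2+49n+19)^2$. Since $n<n+4<4n^2+20n+8$ for $n\ge 1$, the three elements are distinct and positive.
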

\begin{proof}
    The tuple $\{T_n,T_{n+4},T_{4n^2+20n+8}\}$ is a $D(1)$ triple for all $n$. We saw this for $\{T_{n}, T_{n+4} \}$ in Proposition~\ref{prop:infpair}; now, \begin{align*}
        &T_n\cdot T_{4n^2+20n+8}+1\\&=\frac{n(n+1)}{2}\cdot\frac{(4n^2+20n+8)(4n^2+20n+9)}{2}+1\\&=4n^6+44n^5+157n^4+202n^3+103n^2+18n+1\\&=(2n^3+11n^2+9n+1)^2.\\\\&T_{n+4}\cdot T_{4n^2+20n+8}+1\\&=\frac{(n+4)(n+5)}{2}\cdot\frac{(4n^2+20n+8)(4n^2+20n+9)}{2}+1\\&=4n^6+76n^5+557n^4+1938n^3+3123n^2+1862n+361\\&=(2n^3+19n^2+49n+19)^2.\hspace{11.6em}\square
    \end{align*}
    \renewcommand{\qedsymbol}{}
\end{proof}

Both Proposition \ref{prop:infpair} and Proposition \ref{prop:inftrip} are results of the following theorem, first discovered by Euler:

\begin{theorem}
    Suppose that $ab + 1 = r^2$ for integer $a,b$. Then $\{a, b , a + b + 2r, 4r(r+a)(r+b) \}$ is a $D(1)$ quadruple.
\end{theorem}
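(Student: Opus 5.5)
Set $c = a+b+2r$ and $d = 4r(r+a)(r+b)$. The plan is to verify directly that each of the six products $ab$, $ac$, $bc$, $ad$, $bd$, $cd$, plus $1$, is a perfect square, using $ab+1=r^2$ at every step.

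\textbf{The pairs involving $c$.} The pair $\{a,b\}$ holds by hypothesis. For $c$, we compute
\[ac+1 = a^2+ab+2ar+1 = a^2+2ar+r^2 = (a+r)^2,\]
and symmetrically $bc+1 = (b+r)^2$. Write $s=a+r$ and $t=b+r$, so that $d = 4rst$.

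\textbf{The pairs $\{a,d\}$ and $\{b,d\}$.} We expand
\[ad+1 = 4ar(r+a)(r+b)+1 = 4ar(r^2+ar+br+ab)+1.\]
Substituting $ab = r^2-1$ turns the inner factor into $r^2+ar+br+r^2-1$. The expected answer is $ad+1 = (2ar+2r^2-1)^2 = (2rs-1)^2$. To check this, expand $(2rs-1)^2 = 4r^2s^2-4rs+1$; we need $4ar\,st = 4r^2s^2-4rs$, i.e.\ $a\,t = rs-1$. This holds because $a(b+r) = ab+ar = r^2-1+ar = r(a+r)-1$. By symmetry, $bd+1 = (2rt-1)^2$, using $b\,s = rt-1$.

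\textbf{The pair $\{c,d\}$.} This is the main obstacle, since it is the highest-degree identity. Here $c = a+b+2r = s+t$, and we expect $cd+1 = (2st-1)^2$. This requires
\[4rst(s+t) = 4s^2t^2 - 4st,\]
i.e.\ $r(s+t) = st-1$. We check this via $st = (a+r)(b+r) = ab + r(a+b) + r^2 = 2r^2 - 1 + r(a+b)$ and $r(s+t) = r(a+b) + 2r^2$. Their difference is exactly $1$, as needed.

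Finally, when $a,b,r$ are positive the four entries are positive integers. Distinctness is not needed for the square conditions and can be handled separately if required. All six conditions are then satisfied, so the set is a $D(1)$ quadruple.
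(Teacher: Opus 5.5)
Your checks of $\{a,c\}$, $\{b,c\}$, $\{a,d\}$ and $\{b,d\}$ are correct. The $\{c,d\}$ step, however, has a sign error, and the identity you assert there is false. You need $r(s+t)=st-1$. Your own computation gives $st=2r^2-1+r(a+b)$ and $r(s+t)=2r^2+r(a+b)$, so $r(s+t)-st=+1$, i.e.\ $r(s+t)=st+1$. The difference is $1$, but in the opposite direction from the one you need, and the phrase ``as needed'' hides this. Consequently $cd+1=(2st-1)^2$ does not hold. For Fermat's quadruple ($a=1$, $b=3$, $r=2$, so $s=3$, $t=5$, $c=8$, $d=120$) one has $cd+1=961=31^2$, while $2st-1=29$.

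The repair is one line. From $r(s+t)=st+1$ you get $cd=4st\cdot r(s+t)=4st(st+1)$, so
\[cd+1=4s^2t^2+4st+1=(2st+1)^2.\]
With this correction your direct verification is a complete proof. The paper itself gives no proof of this statement and simply attributes it to Euler, so direct verification is the natural route; it just has to get this sign right.

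Since the paper's definition asks for a set of positive integers, you could also settle distinctness instead of deferring it. For $a,b,r>0$:
\begin{itemize}
\item $a\neq b$, because $a^2+1=r^2$ forces $a=0$;
\item $c=a+b+2r$ exceeds both $a$ and $b$;
\item $d=4rst\ge 4st\ge 4(s+t-1)=4(c-1)>c$, using $(s-1)(t-1)\ge 0$ and $c\ge 4$.
\end{itemize}
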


Particularly, by taking $a = 8$ and $b = T_n$, it follows that $\{a, b, a + b + 2r \} = \{8, T_n, T_{n+4} \}$ is a $D(1)$ triple, and $\{a, b, a + b + 2r, 4r(r+a)(r+b) \} = \{8, T_n, T_{n+4}, T_{4n^2 + 20n + 8} \}$ is a $D(1)$ quadruple.\footnote{In fact, a slightly different form of Euler's discovery for Triangular Numbers is used in \cite{b6.5}, where one may arrive at the results presented by using $A = 8$ and $B = T/2$ in the main theorem of the paper.}

Consider that by letting $\{T_a,T_b,T_c\}=\{T_n,T_{n+4},T_0\}$ (in which case $T_0=0$), we obtain the above triple as $\{T_a,T_b,T_{4ab+2a+2b-c}\}$. 
In fact, we can show the following:

\begin{theorem}\label{proof of alg}
Let $S = \{ \{T_{n}, T_{n+4}, T_{4n^2 + 20n + 8} \}: n \in \mathbb{N} \} \cup \{ \{T_1, T_2, T_{15} \} \}.$ Define the function $L$ on a triple of triangular numbers to be $$L( \{T_a, T_b, T_c \}) = \{T_a, T_b, T_{4ab + 2a + 2b - c} \}.$$ 
If $\{T_a, T_b, T_c \} \in S$ is a $D(1)$ triple with $a \le b \le c$, then $L(\{T_a, T_b, T_c \})$, $L(\{T_a, T_c, T_b \})$, and $L(\{T_b, T_c, T_a \})$ are also $D(1)$ triples. 
\end{theorem}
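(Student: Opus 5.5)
The plan is to adjoin the number $8$, as in the proof that no triangular $D(1)$ quadruple exists, and to use the standard theory of \emph{regular} extensions of $D(1)$ triples. Recall that if $\{A,B,C\}$ is a $D(1)$ triple with $AB+1=r^2$, $AC+1=s^2$, $BC+1=t^2$, then
\[
d_\pm = A+B+C+2ABC \pm 2rst
\]
both extend $\{A,B,C\}$ to a $D(1)$ quadruple (with $d_-$ possibly $0$). Moreover
\[
Ad_\pm+1=(At\pm rs)^2,
\]
and the analogous identities hold for $B$ and $C$. A quadruple $\{A,B,C,D\}$ is regular exactly when $(A+B-C-D)^2=4(AB+1)(CD+1)$. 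This condition is symmetric under permuting the four elements. Its content is that $D\in\{d_+,d_-\}$ for the triple formed by the other three, and likewise for each choice of which element is singled out.

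\textbf{Key computation.} Take $A=8$, $B=T_a$, $C=T_b$. Then $r=2a+1$ and $s=2b+1$, and $t=\sqrt{T_aT_b+1}$ is an integer because $\{T_a,T_b\}$ is a $D(1)$ pair. The identity $8d_\pm+1=(8t\pm(2a+1)(2b+1))^2$ shows $d_\pm=T_{c_\pm}$ with
\[
2c_\pm+1=(2a+1)(2b+1)\pm 8t
\]
(up to sign for $c_-$). Adding the two gives
\[
c_++c_-=\tfrac{(2a+1)(2b+1)\cdot 2-2}{2}=4ab+2a+2b.
\]
Hence, if $T_c$ is one of $d_\pm$ for the triple $\{8,T_a,T_b\}$, then $T_{4ab+2a+2b-c}$ is the other one. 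It therefore forms a $D(1)$ triple with $T_a,T_b$, since $d_\mp$ extends $\{8,T_a,T_b\}$ and in particular $T_ad_\mp+1$ and $T_bd_\mp+1$ are squares.

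\textbf{Regularity of the triples in $S$.} It then suffices to show that for every triple in $S$ the quadruple $\{8,T_a,T_b,T_c\}$ is regular. By the symmetry noted above, regularity then holds for each of the three choices of which triangular number plays the role of $d$. This covers $L(\{T_a,T_b,T_c\})$, $L(\{T_a,T_c,T_b\})$ and $L(\{T_b,T_c,T_a\})$ simultaneously.
\begin{itemize}
\item For the family $\{T_n,T_{n+4},T_{4n^2+20n+8}\}$, the quadruple $\{8,T_n,T_{n+4},T_{4n^2+20n+8}\}$ arises from Euler's theorem. Euler's fourth element $4r(r+a)(r+b)$ is precisely $d_+$ of the triple $\{a,b,a+b+2r\}$, so the quadruple is regular. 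If one prefers, regularity can also be checked by verifying $(A+B-C-D)^2=4(AB+1)(CD+1)$ as a polynomial identity in $n$.
\item For $\{1,3,120\}$, one checks directly that $(8+1-3-120)^2=114^2=12996=4\cdot 9\cdot 361$.
\end{itemize}

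\textbf{Main obstacle.} The main obstacle is bookkeeping rather than ideas. First, the sign in $2c_-+1=\pm\bigl(8t-(2a+1)(2b+1)\bigr)$ must be handled carefully, so that the new index really equals $4ab+2a+2b-c$ and not $-1$ minus it. This is harmless, since $T_k=T_{-1-k}$. Second, the degenerate output $d_-=0$ must be noted. For the ordering $(T_n,T_{n+4},T_{4n^2+20n+8})$ the map $L$ returns index $0$, that is $T_0=0$. This is the observation made before the theorem, and it should be stated as a trivial (nonpositive) case. For the other two orderings one checks that the new index is positive, so that the output is a genuine $D(1)$ triple.
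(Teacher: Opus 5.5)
Your proof is correct, and it takes a genuinely different route from the paper's. The paper stays inside the triple. It sets $x=\sqrt{T_aT_b+1}$ and $y=\sqrt{T_aT_c+1}$, and asserts as a ``key observation'', with no derivation, that the new square roots are $y_1=\bigl(x^2-T_a(T_a-8)\bigr)/y$ and $z_1=(xy_1-2a-1)/T_a$. Integrality then follows because these are rational with integral squares.

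You instead adjoin $8$ and recognize $c\mapsto 4ab+2a+2b-c$ as the involution exchanging the two regular extensions $d_\pm$ of $\{8,T_a,T_b\}$, via $8T_k+1=(2k+1)^2$ and $c_++c_-=(2a+1)(2b+1)-1$. The only input about $S$ is regularity of $\{8,T_a,T_b,T_c\}$, which you get from Euler's construction and the check on $\{1,3,8,120\}$. This explains where the index formula comes from. It also supplies what the paper leaves unproved: the paper's first identity is exactly the product relation
$\sqrt{T_ad_++1}\,\sqrt{T_ad_-+1}=\bigl|(8T_a+1)x^2-T_a^2(8T_b+1)\bigr|=\bigl|x^2-T_a(T_a-8)\bigr|$.

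Moreover, $\{8,T_a,T_b,T_{4ab+2a+2b-c}\}$ is again regular by construction, so your argument shows $L$ can be iterated indefinitely. The paper relies on this afterwards (e.g.\ in reaching $\{T_{15},T_{153},T_{9514}\}$), but its theorem, stated only for the base set $S$, does not cover it. The paper's route, if its two identities were actually verified, would be more elementary and need no theory of regular quadruples.

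One correction to your bookkeeping remark. The real danger is not that the output index differs from the right one by $k\mapsto -1-k$; that would indeed be harmless. The danger is that the \emph{input} index might be $c=-1-c_-$ rather than $c_-$ when $c_-<0$. In that case $4ab+2a+2b-c=c_++2c_-+1$, which is genuinely wrong.

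This cannot happen here. $c_-<0$ forces $(2a+1)(2b+1)<8t$, i.e.\ $a(a+1)+b(b+1)<63/4$. Among positive-index $D(1)$ pairs this occurs only for $\{T_1,T_2\}$, where $c_-=-1$, $d_-=0$, and the replaced index is $c_+=15$. Equivalently, a direct computation shows that in the second and third orderings the replaced index is exactly $c_-$: namely $n+4$ and $n$ for the family, and $2$ and $1$ for the special triple.
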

\begin{proof}
    Consider an arbitrary set $s = \{T_a, T_b, T_c \} \in S$ that is a $D(1)$ triple, and set $$x = \sqrt{T_a T_b + 1},$$ $$y = \sqrt{T_a T_c + 1},$$ $$z = \sqrt{T_b T_c + 1}.$$ Since $s$ is a $D(1)$ triple, it follows that $x,y,z$ are positive integers. Now, consider the set $L(s) = \{T_a, T_b, T_{4ab + 2a + 2b - c} \}$ generated. Let 
    \begin{align*}
    x_1 &= \sqrt{T_a T_b + 1},\\
    y_1 &= \sqrt{T_a T_{4ab + 2a + 2b - c} + 1}, \\
    z_1 &= \sqrt{T_b T_{4ab + 2a + 2b - c} + 1}.
    \end{align*}
    Clearly $x_1 = x$ is an integer. The key observation is that the following equations hold: $$y_1 = \frac{x^2 - T_a (T_a - 8)}{y}$$ $$z_1 = \frac{x \cdot y_1 - 2a - 1}{T_a}.$$ 
    
    The above formulations imply that $y_1$ and $z_1$ are rational, since all components involved in their expressions are rational. Moreover, $y_1^2 = T_a T_{4ab + 2a + 2b - c} + 1$ and $z_1^2 = T_b T_{4ab + 2a + 2b - c} + 1$ are clearly integral. So, it follows that $y_1$ and $z_1$ are integral. Hence, $x_1, y_1, z_1$ are all integral, and $\{T_a, T_b, T_{4ab + 2a + 2b - c} \}$ is a $D(1)$ triple. The same formulas work upon the reorderings $\{T_a, T_c, T_b \}$ and $\{T_b, T_c, T_a \}$ of $s$, with the same rearrangement on the indices. (In other words, for the set $\{T_a, T_c, T_b \}$, instead of $x = \sqrt{T_a T_b + 1 }$ we set $x = \sqrt{T_a T_c + 1}$, and so forth throughout the proof.)
\end{proof}

 Note that $\{T_n, T_{n+4}, T_0 \}$ is not a $D(1)$ triple since $T_0 = 0$ is not nonzero. However, for illustratory purposes, we forgo the nonzero restriction in this section and consider them $D(1)$ triples, since they possess the same properties of other $D(1)$ triples. The above proof does not apply for tuples of the form $\{T_0, T_b, T_c \}$ since we get a $0$ in the denominator of $z_1$, but the proof that the generating step works is trivial anyways for tuples containing $T_0$.
 \\
 
 Theorem \ref{proof of alg} tells us that all elements of $S$ are $D(1)$ triples, since $S$ starts off with a basis of $D(1)$ triples, and every addition to the set is subsequently also a $D(1)$ triple. 

 \begin{example}
    Consider the set $s = \{T_2, T_6, T_{64} \}$, which is a $D(1)$ triple of the form \\ $\{T_n, T_{n+4}, T_{4n^2 + 20n + 8} \}$ for $n=2$. Note that $s \in S$.
    
    Consider a rearrangement of $s$, the set $s' = \{T_2, T_{64}, T_6 \}$. As per our definitions in the proof of Theorem \ref{proof of alg}, let $$x = \sqrt{T_2 T_{64} + 1} = 79,$$ $$y = \sqrt{T_2 T_6 + 1} = 8,$$ $$z = \sqrt{T_{64} T_6 + 1} = 209.$$ Observe that $4(2)(64) + 2(2) + 2(64) - 6 = 638$, so $$L(s') = \{T_2, T_{64}, T_{638} \}.$$ Now, $$x_1 = \sqrt{T_2 T_{64} + 1} = 79,$$ $$y_1 = \sqrt{T_2 T_{638} + 1} = 782,$$ $$z_1 = \sqrt{T_{64} T_{638} + 1} = 20591.$$ Hence, $\{T_2, T_{64}, T_{638} \}$ is a $D(1)$ triple, and is also an element of $S$. Moreover, one can check that the formulas hold: clearly $x_1 = x$, and $$y_1 = \frac{79^2 - T_2(T_2 - 8)}{8} = 782,$$ $$z_1 = \frac{79 \cdot 782 - 4 - 1}{T_2} = 20591.$$
\end{example}
 
\begin{example}\label{eg:genalg}
    Let $\to$ denote an example construction of a triangular $D(1)$ triple through the $L$ function, as defined in Theorem \ref{proof of alg}. Then \[\{T_1,T_2,T_{15}\}\to\{T_2,T_{15},T_{153}\}\to\{T_{15},T_{153},T_{9514}\},\] and it is seen that \begin{align*}T_{15}T_{153}+1&=1189^2,\\T_{15}T_{9514}+1&=73699^2,\\T_{153}T_{9514}+1&=730234^2.\end{align*} So, $\{T_{15}, T_{153}, T_{9514} \}$ is a $D(1)$ triangular triple.
\end{example}

Observe that each element of the infinite family of triples specified in Proposition~\ref{prop:inftrip}, along with the set $\{T_1, T_2, T_{15} \}$, generates an infinite family of its own, constructed by repeatedly applying the procedure in Theorem \ref{proof of alg}. It is possible, however, that not all triples are generated through this algorithm. For example, we included the ``special" triple $\{T_1,T_2,T_{15}\}$, which doesn't fall under the $\{T_n, T_{n+4}, T_{4n^2+20n+8} \}$ parametrization, in our set $S$. What if other triples like $\{T_1, T_2, T_{15} \}$ exist? It is not obvious that all triangular $D(1)$ triples are obtained from the algorithm.
\\

However, deeper analysis reveals that the triple $\{T_1, T_2, T_{15} \}$ is not as sporadic as it may seem. First, we expand our definition of triangular numbers to include negative indices:

\begin{lemma}
    $T_{-n}=T_{n-1}$.
\end{lemma}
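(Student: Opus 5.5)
The lemma is a one-line algebraic identity, so I would argue directly from the defining formula. Since the paper has just announced that it is extending triangular numbers to negative indices, I would first fix the convention: for every integer $k$, positive, zero, or negative, set $T_k = \frac{k(k+1)}{2}$.

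Next I would substitute $k=-n$ into this formula. This gives
\[
T_{-n}=\frac{(-n)(-n+1)}{2}=\frac{(-n)\cdot(-(n-1))}{2}=\frac{n(n-1)}{2}.
\]
Substituting $k=n-1$ gives
\[
T_{n-1}=\frac{(n-1)\bigl((n-1)+1\bigr)}{2}=\frac{(n-1)n}{2}.
\]
The two expressions are equal, which proves the lemma.

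There is no real obstacle here; the only point requiring care is making explicit that the closed form $\frac{k(k+1)}{2}$ is taken as the definition for negative $k$, rather than the "sum of the first $k$ positive integers" description. I would also note the geometric interpretation: the map $k\mapsto -1-k$ is the reflection of the parabola $\frac{k(k+1)}{2}$ about its axis $k=-\tfrac12$. This symmetry is what will let triples such as $\{T_1,T_2,T_{15}\}$ be rewritten with negative indices so that they fit the family $\{T_n,T_{n+4},T_{4n^2+20n+8}\}$.
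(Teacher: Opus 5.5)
Your proof is correct and is the same direct substitution into $\frac{k(k+1)}{2}$ that the paper uses, with the convention for negative indices stated explicitly. Your closing aside is slightly off, though. The paper does not rewrite $\{T_1,T_2,T_{15}\}$ itself into the family. Instead it reaches that triple from $\{T_1,T_5,T_{32}\}$ by repeated applications of $L$, after reindexing via $T_0=T_{-1}$ and $T_5=T_{-6}$.
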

\begin{proof}
    $T_{-n}=\frac{-n(-n+1)}{2}=\frac{n(n-1)}{2}=T_{n-1}$.
\end{proof}

In particular, note $T_0=T_{-1}$. Now in fact we see we may apply the procedure of Theorem \ref{proof of alg} to yield $\{T_1,T_2,T_{15}\}$ from a triple of the form $\{T_n,T_{n+4},T_{4n^2 + 20n + 8}\}$: let $n=1$, then with $\to$ denoting an application of Theorem \ref{proof of alg}, we see \begin{align*}\{T_1, T_5, T_{32} \} &\to \{T_1,T_5,T_0\}=\{T_1,T_{-6},T_{-1}\}\\ &\to\{T_{1},T_{-1},T_{2} \} \to\{T_1,T_2,T_{15}\}.\end{align*}

We have studied the patterns of negatively indexed triangular numbers significantly less than those of positively indexed ones, primarily as the former are not as intuitive (what is the notion of summing the first $-n$ integers, and why is it positive?), but there appears to be further theory in this direction, which we are hopeful yields results in future work.

\section{General $D(n)$ Triangular Pairs} \label{$D(2)$}

In Section \ref{$D(1)$} we proved that infinitely many triangular $D(1)$ triples exist; it then stands to examine $D(n)$ triangular tuples for $n$ being an arbitrary integer. In particular, we will take a look at the simplest case---triangular $D(n)$ pairs. The question we would like to answer is:
$$\text{For what } n \text{ do there exist triangular } D(n) \text{ pairs? }$$ 
To find triangular pairs, we must find positive integer solutions $(a,b,c)$ to the equation 
$$\frac{a(a+1)}{2} \cdot \frac{b(b+1)}{2} + n = c^2.$$
Then, such a positive integer solution would correspond with the $D(n)$ pair $(T_a, T_b)$, with $T_a T_b + n = c^2$.

\begin{theorem}
    Fix a positive integer $a$ and an integer $n$. Let $A$ be the set of all positive integer solutions $(b,c)$ to \begin{equation}\label{$D(2)$ first}
         \frac{a(a+1)}{2} \cdot \frac{b(b+1)}{2} + n = c^2,
    \end{equation} 
    and let $B$ be the set of all positive integer solutions $(x,y)$ to 
    \begin{equation} \label{$D(2)$ second}
    x^2 - a(a+1) y^2 = 16n - a(a+1),
    \end{equation}
    where $x \equiv 0 \pmod{4}$ and $y \equiv 1 \pmod{2}$. There exists a bijective map between $A$ and $B$; in fact, such a map is described by the function $f: A \to B$ with $f((n,k)) = (4k, 2n+1)$. 
\end{theorem}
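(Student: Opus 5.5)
The plan is to verify directly that $f$ sends solutions of \eqref{$D(2)$ first} to solutions of \eqref{$D(2)$ second} with the required congruences, and then to exhibit an explicit inverse. The map in the statement reuses the letter $n$ for the first coordinate, so to avoid a clash we write a generic element of $A$ as $(b,c)$ and take $f((b,c)) = (4c, 2b+1)$.

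The first step is an algebraic identity obtained by completing the square. Multiply \eqref{$D(2)$ first} by $16$:
\[
16c^2 = 4a(a+1)\,b(b+1) + 16n = a(a+1)\bigl((2b+1)^2 - 1\bigr) + 16n .
\]
With $x = 4c$ and $y = 2b+1$ this becomes
\[
x^2 - a(a+1)y^2 = 16n - a(a+1),
\]
which is exactly \eqref{$D(2)$ second}. Each step is a reversible equivalence, because multiplying by $16$ is invertible over $\mathbb{Q}$. So for integers related by $x=4c$, $y=2b+1$, the pair $(b,c)$ satisfies \eqref{$D(2)$ first} if and only if $(x,y)$ satisfies \eqref{$D(2)$ second}.

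The second step checks that $f$ lands in $B$. If $(b,c)\in A$ with $b,c\ge 1$, then $x=4c$ is a positive multiple of $4$ and $y=2b+1$ is a positive odd integer. By the identity above, $(x,y)\in B$.

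The third step builds the inverse $g(x,y) = \bigl(x/4, (y-1)/2\bigr)$. For $(x,y)\in B$, the conditions $x\equiv 0 \pmod 4$ and $y$ odd make both coordinates integers, and the identity run backwards shows that $g(x,y)$ solves \eqref{$D(2)$ first}. Clearly $f\circ g$ and $g\circ f$ are the identity maps, so $f$ is a bijection.

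The only real obstacle is positivity at the boundary. A positive odd $y$ could equal $1$, and then $b=(y-1)/2=0$, which is not a positive solution. For $y=1$ the equation \eqref{$D(2)$ second} reads $x^2 = 16n$, i.e.\ $c^2=n$, which corresponds to the degenerate triangular number $T_0=0$. I would handle this in one of two ways. The first is to note that such points correspond exactly to the solution $b=0$, $c=\sqrt n$. The second, which I would adopt, is to read $B$ as requiring $y\ge 3$, or equivalently to allow $b\ge 0$ in $A$, in keeping with the paper's earlier convention of admitting $T_0$. The bijection then holds exactly. Positivity of $x$ causes no trouble, since $x>0$ is equivalent to $c>0$.
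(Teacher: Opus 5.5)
Your proof is correct and follows the paper's argument: multiply by $16$, complete the square via $4b(b+1)=(2b+1)^2-1$, and get the inverse by reversing the steps. Your boundary check is a genuine improvement on the paper's ``each step is easily reversible'': when $n=m^2$ is a positive square (for instance the $D(1)$ case), $(4m,1)\in B$ has no preimage in $A$, so the statement is only a bijection once you impose $y\ge 3$ in $B$ or allow $b\ge 0$ in $A$, exactly as you propose.
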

\begin{proof}
    We will rewrite Equation (\ref{$D(2)$ first}) as Equation (\ref{$D(2)$ second}), and show that the map holds and is bijective. We get 
    \begin{align*}
    \frac{a(a+1)}{2} \cdot \frac{b(b+1)}{2} + n = c^2 &\iff 4a(a+1) \cdot b(b+1) + 16n = 16c^2 \\
    &\iff a(a+1) ((2b+1)^2 - 1) + 16n = 16c^2 \\
    &\iff 16n - a(a+1) = 16c^2 - a(a+1)(2b+1)^2 \\
    &\iff (4c)^2 - a(a+1)(2b+1)^2 = 16n - a(a+1).
    \end{align*}
    The final equation reveals that solution $(b,c)$ to Equation (\ref{$D(2)$ first}) corresponds to a solution $(4c, 2b+1)$ to Equation (\ref{$D(2)$ second}), as given by the function $f$. Moreover, each step is easily reversible, hence this function $f$ is also bijective, as desired.
\end{proof}

Equation (\ref{$D(2)$ second}) describes a family of \textbf{Generalized Pell Equations} (GPEs) of the form $$x^2 - Dy^2 = N$$ for positive integers $D$ and integers $N$. This new form is much more helpful when determining the existence of solutions. The main use for Equation (\ref{$D(2)$ second}) is because of the following theorem:

\begin{theorem}\label{Bounding Theorem}
    Consider the ring $\mathbb{Z}[\sqrt{d}]$ for a positive integer $d$, and define the norm of an element $z = a + b \sqrt{d} \in \mathbb{Z}[\sqrt{d}]$ to be $N(z) = a^2 - db^2$. Let $u = a + b\sqrt{d}$ be an element of $\mathbb{Z}[\sqrt{d}]$ with $N(u) = 1$ and $a, b > 0$. Then, there exists a solution $(x,y) \in \mathbb{Z}^2$ to $x^2 - dy^2 = n$, where $n$ is a nonzero integer, if and only if there exists a solution $(x_1, y_1)$ with 
    \begin{equation}\label{Bounded}
    |x_1| \le \frac{\sqrt{\left|n\right|} (1 + \sqrt{u})}{2}, |y_1| \le \frac{\sqrt{\left|n\right|} (1 + \sqrt{u})}{2 \sqrt{d}}.
    \end{equation}
\end{theorem}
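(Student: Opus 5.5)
The plan is to prove the nontrivial direction by moving an arbitrary solution into a bounded region, multiplying by powers of the unit $u$. The converse is immediate: a solution $(x_1,y_1)$ satisfying (\ref{Bounded}) is in particular a solution in $\mathbb{Z}^2$.

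First record some facts about $u$. Since $a,b>0$ we have $u = a+b\sqrt d > 1$ as a real number. Its conjugate $\bar u = a - b\sqrt d$ satisfies $u\bar u = N(u) = 1$, so $u^{-1} = \bar u \in \mathbb{Z}[\sqrt d]$. Hence $u^k \in \mathbb{Z}[\sqrt d]$ and $N(u^k)=1$ for every $k\in\mathbb{Z}$. Multiplicativity of the norm is the routine identity $N(zw)=N(z)N(w)$, equivalently $\overline{zw}=\bar z\,\bar w$.

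Now suppose $z = x+y\sqrt d$ has $N(z)=z\bar z = n \neq 0$; in particular $z\ne 0$. Put $s=\sqrt{|n|}$. Because $u>1$, the real numbers $\log|z u^k| = \log|z| + k\log u$ form an arithmetic progression with positive step $\log u$. So there is an integer $k$ with
\[
\frac{s}{\sqrt u} \le |z u^k| < s\sqrt u,
\]
obtained by taking $k$ to be the appropriate floor. Set $z' = zu^k = x_1 + y_1\sqrt d$, which has $N(z')=n$. Write $t=|z'|$; then $|\bar z'| = |n|/t = s^2/t$ also lies in $(s/\sqrt u,\, s\sqrt u]$. Choosing $k$ is the only slightly delicate step. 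It requires $u>1$, which is exactly where the hypothesis $a,b>0$ enters, and it requires working with absolute values, since $z'$ may be a negative real number.

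Finally, recover the coordinates as $x_1 = (z'+\bar z')/2$ and $y_1 = (z'-\bar z')/(2\sqrt d)$. By the triangle inequality,
\[
|x_1| \le \tfrac12\left(t + \tfrac{s^2}{t}\right), \qquad |y_1| \le \tfrac{1}{2\sqrt d}\left(t+\tfrac{s^2}{t}\right).
\]
The function $t\mapsto t+s^2/t$ is convex, so on the interval $[s/\sqrt u, s\sqrt u]$ it is maximized at an endpoint. Both endpoints give the same value, $s(\sqrt u + 1/\sqrt u)$. Since $u>1$ we have $1/\sqrt u < 1$, so this value is at most $s(1+\sqrt u)$. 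Therefore $|x_1|\le \sqrt{|n|}(1+\sqrt u)/2$ and $|y_1| \le \sqrt{|n|}(1+\sqrt u)/(2\sqrt d)$, which is (\ref{Bounded}).
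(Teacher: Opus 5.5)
Your proof is correct and complete. The paper gives no argument of its own here and simply defers to Theorem 2.1 of its cited reference; your unit-reduction argument (shift $z$ by the power of $u$ that places $|zu^k|$ in $[\sqrt{|n|}/\sqrt{u},\,\sqrt{|n|}\sqrt{u})$, then bound both coordinates by $\tfrac12\bigl(t+|n|/t\bigr)$ using $|z'|\,|\bar z'|=|n|$) is the standard proof of that result, and it even gives the slightly sharper constant $\sqrt{|n|}\bigl(\sqrt{u}+1/\sqrt{u}\bigr)/2$ before you relax it to the stated bound.
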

\begin{proof}
    See Theorem 2.1 from \cite{b7}.
\end{proof}

Theorem \ref{Bounding Theorem} lets us determine whether a triangular number $T_a$ is part of any Diophantine $D(n)$ pair by checking through a finite space, given by the pair of inequalities in (\ref{Bounded}).
\\

The main problem with Theorem \ref{Bounding Theorem} is determining an element $u$ in $\mathbb{Z}[\sqrt{d}]$ with norm $1$. For general $d$, this is not trivial - in fact, it is equivalent to the solving the standard Pell Equation $x^2 - dy^2 = 1$. In our case, it is much simpler.

\begin{lemma}\label{unit}
    The element $u = 2t + 1 + 2 \sqrt{t(t+1)}$ has norm $1$ in $\mathbb{Z}[\sqrt{t(t+1)}]$.
\end{lemma}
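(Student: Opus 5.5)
The plan is to compute the norm directly from the definition in Theorem \ref{Bounding Theorem}, taking $d = t(t+1)$. Writing $u = A + B\sqrt{d}$ with $A = 2t+1$ and $B = 2$, the norm is
\[
N(u) = A^2 - dB^2 = (2t+1)^2 - 4t(t+1).
\]
Expanding gives $(2t+1)^2 = 4t^2+4t+1$ and $4t(t+1) = 4t^2+4t$. Their difference is $1$, so $N(u)=1$.

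Two side conditions need checking so that $u$ can be used in Theorem \ref{Bounding Theorem}, where $u$ must have positive coefficients and lie in $\mathbb{Z}[\sqrt{d}]$.

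\begin{itemize}
\item For a positive integer $t$ (in our application $t = a$), both $A = 2t+1$ and $B = 2$ are positive integers.
\item The number $d = t(t+1)$ is a positive integer, and it is not a perfect square because $t^2 < t(t+1) < (t+1)^2$. Strictly, this is only needed so that $\mathbb{Z}[\sqrt{d}]$ is a genuine quadratic ring; the norm computation itself does not depend on it.
\end{itemize}

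There is no real obstacle here: the identity is a one-line expansion. If one wants motivation, note that $u = (\sqrt{t+1}+\sqrt{t})^2$. Since $(\sqrt{t+1}+\sqrt{t})(\sqrt{t+1}-\sqrt{t}) = 1$, squaring shows immediately that $u$ times its conjugate equals $1$.
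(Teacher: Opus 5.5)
Your proof is correct and is the paper's argument: a direct expansion showing $(2t+1)^2 - 2^2\,t(t+1) = 1$. The paper does not include your side remarks: the positivity of the coefficients, the fact that $t(t+1)$ is not a square, and the factorization $u = (\sqrt{t+1}+\sqrt{t})^2$. All three are accurate.
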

\begin{proof}
    Check: 
    \begin{align*}
        N(u) &= (2t+1)^2 - 2^2 t(t+1) \\
        &= 4t^2 + 4t + 1 - 4t(t+1) = 1. \qedhere
    \end{align*}
\end{proof}

Now, we can apply Theorem \ref{Bounding Theorem} to our GPE in Equation (\ref{$D(2)$ second}). In particular, an integer solution to this equation exists if and only if an integer solution $(x_1, y_1)$ exists with $$|x_1| \le \frac{\sqrt{|16n - a(a+1)|} \left(1 + \sqrt{2a + 1 + 2\sqrt{a(a+1)}} \right)}{2},$$  $$|y_1| \le \frac{\sqrt{|16n - a(a+1)|} \left(1 +\sqrt{2a + 1 + 2\sqrt{a(a+1)}} \right)}{2 \sqrt{a(a+1)}}.$$ Keep $n$ fixed. Asymptotically, the bound on $x_1$ is $O(a^{3/2})$ (with $O$ meaning asymptotically), while the bound on $y_1$ is $O(a^{1/2})$. So, if we were to check whether or not $T_a$ is in a $D(n)$ pair, we would:
\\

\begin{itemize}
    \item Set $u = 2a + 1 + 2 \sqrt{a(a+1)}$
    \item Check if $16n - a(a+1) + a(a+1) y^2$ is a perfect square, for all integers $y$ less than or equal to the bound in (\ref{Bounded}) \\
\end{itemize}

This algorithm runs in $O(a^{1/2})$ time, since $a,u$ are explicitly given. If we were to determine whether $T_a$ is part of a $D(n)$ pair for all $a < A$ for some integer $A$, then iterating over all $a$ leads to an algorithm that runs in $O(A^{3/2})$ time. 
\\

Why is this so useful? The naive brute-force algorithm involves checking whether $(T_a, T_b)$ is a $D(n)$ pair for fixed $n$, where $1 \le a \le c_1$ and $1 \le b \le c_2$ for some integers $c_1, c_2$. Nevertheless, the naive algorithm does not allow us to determine whether a triangular number is part of \textbf{any} possible triangular $D(n)$ pair. This modified algorithm, however, does allow us to. The following example is a good illustration.

\begin{example}
    Consider $n = 2$, and suppose we would like to check whether $T_a$ is in a $D(2)$ Triangular Pair for all $1 \le a \le 20$. We will check for $a = 20$, which induces the GPE $$x^2 - 420y^2 = 32 - 420 = -388.$$ The unit $u$ from Lemma \ref{unit} is $u = 41 + 2 \sqrt{420}$. Then, according to Theorem \ref{Bounding Theorem}, we need to check the region $$|y_1| \le \frac{\sqrt{388} \left(1 + \sqrt{41 + 2 \sqrt{420}} \right)}{2 \sqrt{420}} \approx 4.832.$$ In particular, for $y_1 = 1, 2, 3, 4$, we can check that $420y_1^2 - 388$ is not a perfect square, so no solutions $(x,y)$ exist for $a = 20$. Repeating this process for $1 \le a \le 19$ gives the algorithm as described above.
\end{example}

The algorithm not only significantly reduces the complexity of the problem, but also provides lower bounds for what triangular numbers can be part of a $D(n)$ tuple. Given a sufficiently large lower bound, the algorithm also provides reasonable suspicion as to answering ``For which integer $n$ do there not exist any $D(n)$ triangular pairs?"
\\

We implemented the algorithm\footnote{The implementation can be found at https://github.com/SounakB1/D-n-Generalized-Pell-Equation-Calculator.git.} 
 described above in SageMath, an open-source math system that builds upon Python libraries. We ran tests for triangular number $D(n)$ pairs on $1 \le n \le 50$, testing whether $T_a$ was part of a triangular number pair for $1 \le a \le 10^5$. Thirty-six such $n$ produced a $D(n)$ pair in the range.
 
 \begin{example}
     Suppose that $n = 7$. Using the bounds for $t=1$ reveals no solutions to the GPE $x^2 - 2y^2 = 110$, but for $t = 2$, the equation $x^2 - 6y^2 = 106$ has the solution $(x,y) = (16,5)$. From the bijective map, this corresponds to the solution $(T_{t}, T_{\frac{y-1}{2}}) = (T_2, T_2)$, which is indeed a $D(7)$ triangular pair as $3 \cdot 3 + 7 = 4^2$.
 \end{example}
 
 For fourteen such $n$, no integer solutions $(x,y)$ in the range were found. They are: $$n = 2, 5, 11, 12, 14, 17, 20, 23, 29, 32, 38, 41, 42, 47$$ Why is this the case? For some $n$, this is explainable; others, not as much.

It appears there do not exist any $D(9k+2)$ or $D(9k+5)$ triangular pairs, based on data from $1 \le n \le 50$, for nonnegative integer $k$.  We'll prove this claim in this section. We'll also revert back to using Equation (\ref{$D(2)$ first})---now that we are trying to disprove the existence of solutions, we no longer need the GPE form. 
\\

The proof is by contradiction. We start off with a simple observation. Suppose that $$T_a T_b + 3k + 2 = c^2,$$ for positive integers $a,b,c$ and integer $k$. Note that $3 \mid T_a$ if and only if $a \equiv 0, 2 \pmod{3}$. Thus, if either $a \equiv 0, 2 \pmod{3}$ or $b \equiv 0, 2 \pmod{3}$, then modulo $3$ the equation becomes $$2 \equiv c^2 \pmod{3},$$ which is impossible. Hence, we may assume that $a, b \equiv 1 \pmod{3}$.

\begin{theorem}
Let $n$ be an integer congruent to $2$ or $5$ modulo $9$. Then, no triples of positive integers $(a,b,c)$ exist with $$T_a T_b + n = c^2.$$ In other words, no $D(9k + 2)$ or $D(9k + 5)$ pairs of triangular numbers exist, for all integers $k$.
\end{theorem}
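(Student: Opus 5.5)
The plan is a congruence argument modulo $9$, building on the mod $3$ observation made just before the statement. Suppose for contradiction that $T_a T_b + n = c^2$ with $n \equiv 2$ or $5 \pmod 9$. In both cases $n \equiv 2 \pmod 3$, so $n = 3k+2$ for some integer $k$. The preceding observation then applies directly: if $a \equiv 0,2 \pmod 3$ or $b \equiv 0,2 \pmod 3$, then $3 \mid T_a T_b$ and $c^2 \equiv 2 \pmod 3$, which is impossible. Hence we may assume $a \equiv b \equiv 1 \pmod 3$.

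The key step is to compute $T_a$ modulo $9$ when $a \equiv 1 \pmod 3$. Write $a = 3m+1$ with $m \ge 0$. Then
\[
T_a = \frac{(3m+1)(3m+2)}{2} = \frac{9m^2 + 9m + 2}{2} = 9\cdot\frac{m(m+1)}{2} + 1 .
\]
Since $\frac{m(m+1)}{2}$ is an integer, this gives $T_a \equiv 1 \pmod 9$. For a sanity check, the residues $a \equiv 1, 4, 7 \pmod 9$ give $T_1 = 1$, $T_4 = 10$ and $T_7 = 28$, all $\equiv 1 \pmod 9$. The same computation gives $T_b \equiv 1 \pmod 9$, so $T_a T_b \equiv 1 \pmod 9$.

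It follows that $c^2 \equiv 1 + n \pmod 9$, which is $3$ or $6 \pmod 9$ according as $n \equiv 2$ or $5 \pmod 9$. However, the squares modulo $9$ are only $0, 1, 4, 7$. This is the desired contradiction, so no such triple $(a,b,c)$ exists.

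The only point needing any care is the mod $9$ evaluation of $T_a$, because of the division by $2$. Writing $T_a$ as $9 \cdot \frac{m(m+1)}{2} + 1$ handles this cleanly. The argument uses only residues, so it covers every integer $k$, including negative $n$, as the statement requires.
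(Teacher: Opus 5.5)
Your proof is correct and follows essentially the same route as the paper: you use the mod $3$ obstruction to force $a \equiv b \equiv 1 \pmod 3$, show $T_a \equiv T_b \equiv 1 \pmod 9$, and conclude $c^2 \equiv 3$ or $6 \pmod 9$, which is impossible. The only cosmetic difference is that you obtain $T_a \equiv 1 \pmod 9$ from the identity $T_{3m+1} = 9\cdot\frac{m(m+1)}{2} + 1$, whereas the paper checks the residues $a \equiv 1,4,7 \pmod 9$ one by one; your identity cleanly handles the division by $2$.
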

\begin{proof}
Assume for the sake of contradiction that such a triple $(a,b,c)$ exists. We must have $a, b \equiv 1 \pmod{3}$, since $n \equiv 2 \pmod{3}$. Then, observe that $$T_a \equiv T_b \equiv \frac{1(1+1)}{2} \equiv 1 \pmod{3}.$$ Hence, $T_a T_b \equiv 1 \pmod{3}$. This implies that, modulo $3$, $$c^2 = T_a T_b + n \equiv 1 + 2 \equiv 0 \pmod{3}.$$ From this, $3 \mid c^2$, so $3 \mid c$ and $9 \mid c^2$. Hence, $$T_a T_b + n \equiv 0 \pmod{9}.$$ Consider the value of $a$ modulo $9$. If $a \equiv 1 \pmod{9}$, then $$T_a \equiv \frac{1(1+1)}{2} \equiv 1 \pmod{9}.$$ If $a \equiv 4 \pmod{9}$, then $$T_a \equiv \frac{4(4+1)}{2} \equiv 10 \equiv 1 \pmod{9}.$$ Finally, if $a \equiv 7 \pmod{9}$, then $$T_a \equiv \frac{7(7+1)}{2} \equiv 28 \equiv 1 \pmod{9}.$$ Hence, because $a \equiv 1 \pmod{3}$, it follows that $T_a \equiv 1 \pmod{9}$. Similarly, $T_b \equiv 1 \pmod{9}$, so $$T_a T_b + n \equiv 1 + n \equiv 3, 6 \pmod{9},$$ a contradiction since neither $3$ nor $6$ are quadratic residues.
\end{proof}
\subsection{Sporadic Cases}
The sporadic cases of $n = 12, 17, 42$, for which there appear to be no $D(n)$ triangular doubles, do not seem to follow any pattern. It is possible that the search space in which we are checking, i.e. $1 \le a \le 10^5$, is not large enough, and that for each $n$ in this list, there exist $x, y > 10^5$ where $T_x T_y + n$ is a perfect square. However, it seems unlikely that this is the case, as the range $1 < y < 5$ produces solutions for other $n$. In fact, we propose the following:

\begin{conjecture}\label{conj}
    There are infinitely many integers $n \not\equiv 2, 5 \pmod{9}$ such that no $D(n)$ triangular number pairs exist. In other words, infinitely many sporadic $n$ exist for which no $D(n)$ triangular pairs exist.
\end{conjecture}

One thing we can prove is that such $n$ do not arise as a result of a congruential obstruction. In other words, we will prove the following:

\begin{theorem}\label{big theorem n}
Fix a positive integer $n$, and let $m$ be a positive integer with $9 \nmid m$. Then, there exist positive integers $a,b,c$ such that $$T_a T_b + n = c^2 \pmod{m}.$$ In other words, the equation has solutions in $\mathbb{Z}/m \mathbb{Z}$. \end{theorem}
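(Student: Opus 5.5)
The plan is to reduce the statement to prime powers with the Chinese Remainder Theorem and then treat three kinds of modulus separately. Write $m = 2^{e_0} \cdot 3^{\epsilon} \cdot \prod p_i^{e_i}$, where the $p_i \ge 5$ are primes and $\epsilon \in \{0,1\}$ because $9 \nmid m$. If for each prime-power factor $q$ we find residues $a_q, b_q, c_q$ with $T_{a_q} T_{b_q} + n \equiv c_q^2 \pmod q$, then CRT gives $a,b,c$ modulo $m$. The value of $T_a$ modulo $q$ depends only on $a$ modulo $2q$, so for the $2$-power factor we should take the residue of $a$ modulo $2^{e_0+1}$; the CRT step still works because the moduli remain coprime. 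Finally we choose positive representatives, which is harmless since only residues matter.

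\emph{The modulus $3$.} We split by $n \bmod 3$:
\begin{itemize}
\item If $n \equiv 0$, take $a \equiv 0 \pmod 3$, so $T_a \equiv 0$, and $c \equiv 0$.
\item If $n \equiv 1$, take $T_a \equiv 0$ again and $c \equiv 1$.
\item If $n \equiv 2$, take $a \equiv b \equiv 1$, so $T_aT_b + n \equiv 1 + 2 \equiv 0 = 0^2$.
\end{itemize}
This is exactly where the hypothesis $9 \nmid m$ is needed, since the earlier theorem shows that the analogous statement fails modulo $9$.

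\emph{The modulus $2^e$.} Here I would show that $a \mapsto T_a$ is surjective modulo $2^e$. We have $8T_a + 1 = (2a+1)^2$, and every residue $\equiv 1 \pmod 8$ modulo $2^{e+3}$ is an odd square. Hence $(x^2-1)/8$ runs over all residues modulo $2^e$ as $x$ runs over odd integers. Taking $b = 1$ (so $T_b = 1$), $c = 1$ and $a$ with $T_a \equiv 1 - n \pmod{2^e}$ then finishes this case.

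\emph{The modulus $p^e$, $p \ge 5$.} Fix $b$ with $t = T_b$ a unit modulo $p$. Substituting $x = 2a+1$, the congruence $tT_a + n \equiv c^2$ becomes
\[ t x^2 - 8c^2 \equiv t - 8n \pmod{p^e}. \]
Once $x$ is found, $a$ is recovered from $x$ because $2$ is invertible. This is a nondegenerate binary diagonal form with unit coefficients. Such a form represents every nonzero residue modulo $p$ (the standard pigeonhole argument on $\{tx^2\}$ and $\{k + 8c^2\}$). If $k = t - 8n \not\equiv 0 \pmod p$, any solution modulo $p$ has $(x,c) \not\equiv (0,0)$, so the gradient $(2tx, -16c)$ is nonzero modulo $p$. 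Hensel's lemma then lifts the solution to modulo $p^e$, varying whichever variable has unit partial derivative.

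The main obstacle is the degenerate case $k \equiv 0 \pmod p$, where the only solution modulo $p$ might be $(0,0)$ and cannot be lifted. I would avoid it by choosing $t$ well. The values $T_b = (y^2-1)/8$ modulo $p$ take $(p+1)/2$ distinct values, at most one of which is $0$. So there are at least $(p-1)/2 \ge 2$ unit values of $T_b$, and we can pick $t$ with $t \not\equiv 8n \pmod p$. With this choice the Hensel argument applies, and combining all prime-power cases by CRT proves Theorem~\ref{big theorem n}.
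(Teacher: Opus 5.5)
Your proposal is correct and follows essentially the same route as the paper: reduce by CRT to the moduli $2^e$, $3$ and $p^e$ with $p \ge 5$, use surjectivity of $a \mapsto T_a$ modulo $2^e$ (you via $8T_a+1=(2a+1)^2$ and odd squares modulo $2^{e+3}$, the paper via an injectivity count), and use pigeonhole modulo $p$ plus Hensel lifting for $p \ge 5$. Your explicit choice of $T_b \not\equiv 0, 8n \pmod{p}$, which guarantees a nonsingular root, is more careful than the paper's claim that with $t=1$ all gradient components can be made nonzero. That claim fails when $p \mid 8n-1$ and $2$ is a nonresidue modulo $p$, and the paper's argument survives there only because the $t$-partial equals $3$.
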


We'll take this by parts. Like most proofs of this nature, our general idea is to:

\begin{itemize}
    \item Show that the equation has solutions in $\mathbb{Z}/p\mathbb{Z}$
    \item Extend the proof to $\mathbb{Z}/p^k\mathbb{Z}$ for integer $k$ using Hensel's Lemma
\end{itemize}

Let's tackle the first part. We've already showed that this is true in $\mathbb{Z} / 3 \mathbb{Z}$. we'll prove the following result:

\begin{theorem}\label{sum of squares}
    For fixed integers $a,b,c$, there exists a solution to the equation $ax^2 + by^2 \equiv c \pmod{p}$ for all primes $p$ such that $p \nmid a,b$. 
\end{theorem}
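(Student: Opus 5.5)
The plan is the standard pigeonhole argument on the sets of values taken by $ax^2$ and by $c - by^2$ modulo $p$. The case $p = 2$ is handled separately first. There $a$ and $b$ are odd, so $ax^2 + by^2 \equiv x^2 + y^2 \equiv x + y \pmod 2$. Taking $(x,y) = (0,0)$ or $(1,0)$ then realizes either residue of $c$. So from now on I assume $p$ is odd.

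For odd $p$, let
\[
U = \{ a x^2 \bmod p : x \in \mathbb{Z}/p\mathbb{Z} \}, \qquad V = \{ c - b y^2 \bmod p : y \in \mathbb{Z}/p\mathbb{Z} \}.
\]
First I count the squares modulo $p$. The map $x \mapsto x^2$ on $(\mathbb{Z}/p\mathbb{Z})^\times$ is exactly two-to-one, since $x^2 \equiv z^2$ forces $p \mid (x-z)(x+z)$, so $x \equiv \pm z$, and $x \not\equiv -x$ because $p$ is odd and $x \neq 0$. Hence there are $(p-1)/2$ nonzero squares, and together with $0$ there are $(p+1)/2$ squares in total.

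Since $p \nmid a$, multiplication by $a$ is a bijection of $\mathbb{Z}/p\mathbb{Z}$, so $|U| = (p+1)/2$. Likewise $p \nmid b$ means $y^2 \mapsto c - b y^2$ is injective, so $|V| = (p+1)/2$. Then $|U| + |V| = p + 1 > p$, so $U$ and $V$ must intersect. A common element gives $x, y$ with $a x^2 \equiv c - b y^2$, that is, $a x^2 + b y^2 \equiv c \pmod p$, as required.

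No step here is a real obstacle. The only point needing care is the exact count of squares, where oddness of $p$ is what makes the map two-to-one, and that is why $p = 2$ is treated separately. I would also note for later use that the hypothesis $p \nmid a, b$ is needed only to keep $|U|$ and $|V|$ equal to $(p+1)/2$. This is the form in which the result will be applied to the rewritten equation $(4c)^2 - a(a+1)(2b+1)^2 \equiv 16n - a(a+1)$, with $a$ chosen so that $p \nmid a(a+1)$.
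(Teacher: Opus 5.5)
Your proposal is correct and is essentially the paper's pigeonhole argument: the paper compares the $(p+1)/2$ values of $x^2$ with the $(p+1)/2$ values of $(c-by^2)/a$, while you compare $ax^2$ with $c-by^2$, which amounts to the same thing. Your separate treatment of $p=2$ fills a small gap in the paper, whose count of $(p+1)/2$ squares is valid only for odd $p$.
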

\begin{proof}
    Consider all values of the form $$\frac{c-by^2}{a} \pmod{p},$$ where $a,b,c$ are fixed. Note that $y^2$ takes on $\frac{p+1}{2}$ values (the number of quadratic residues modulo $p$). Hence, the expression $\frac{c-by^2}{a}$ modulo $p$ also takes on $\frac{p+1}{2}$ values, since $a,b,c$ are constant and not divisible by $p$. But note that the equation $ax^2 + by^2 \equiv c \pmod{p}$ is equivalent to $$x^2 \equiv \frac{c-by^2}{a} \pmod{p}.$$ Since $x^2$ takes on $\frac{p+1}{2}$ values modulo $p$, and $\frac{p+1}{2} + \frac{p+1}{2} = p+1$, by the Pigeonhole Principle, there exists a solution $(x,y)$ to the congruence, as desired.
\end{proof}

This can be extended to the GPE form of our triangular $D(1)$ equation, i.e. in Equation (\ref{$D(2)$ first}).

\begin{corollary}\label{squarefree}
    The equation $x^2 - t(t+1)y^2 = 16n - t(t+1)$ has a solution $(x,y,t)$ modulo $p$, for any prime $p$ and any integer $n$. As a result, the equation $$T_a T_b + n = c^2$$ has a solution $(a,b,c)$ modulo $p$, for any integer $n$.
\end{corollary}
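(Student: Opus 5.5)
We prove Corollary \ref{squarefree} by reducing it to Theorem \ref{sum of squares} for odd primes $p \neq 3$ where possible, and by handling the remaining small primes directly.

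\emph{Odd primes.} Fix an odd prime $p$ and an integer $n$. Choose $t$ with $t(t+1) \not\equiv 0 \pmod p$; for instance $t=1$ gives $t(t+1)=2$, which is a unit modulo every odd prime. Then the congruence $x^2 - t(t+1)y^2 \equiv 16n - t(t+1) \pmod p$ has the form $ax^2+by^2\equiv c$ with $a=1$ and $b=-t(t+1)$, both coprime to $p$. Theorem \ref{sum of squares} therefore gives a solution $(x,y)$.

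To transfer this solution to $T_aT_b+n\equiv c^2$, invert the bijection from the GPE theorem modulo $p$. Since $2$ and $16$ are units for odd $p$, we may set
\[
a=t,\qquad b\equiv \frac{y-1}{2},\qquad c\equiv \frac{x}{4}\pmod p.
\]
Running the chain of equivalences in that proof backwards, every step only multiplies or divides by units, so it remains valid modulo $p$. The resulting residues lift to positive integers $a,b,c$, since only their classes modulo $p$ matter.

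\emph{The prime $p=2$.} Here the GPE form is useless, because $16n\equiv 0$ and $t(t+1)\equiv 0$ modulo $2$, and dividing by $4$ is illegitimate. So we treat $T_aT_b+n\equiv c^2 \pmod 2$ directly: every residue mod $2$ is a square, so $a=b=1$ with $c\equiv 1+n \pmod 2$ works. On the GPE side, $x^2\equiv 0$ is solved by $x=0$ with any $y$, so the first assertion is also immediate.

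\emph{The prime $p=3$.} This case is either already done in the text or immediate. Take $a\equiv 0 \pmod 3$, so $T_a\equiv 0$, and then we need $n\equiv c^2 \pmod 3$. That fails when $n\equiv 2$. In that case take $a\equiv b\equiv 1$, so $T_aT_b+n\equiv 1+2\equiv 0$, and $c=0$ works.

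The main obstacle is the bookkeeping at $p=2$, where the bijection with the GPE breaks down. Writing out that case directly, as above, rather than through the Pell form, avoids it. The only other point to check is that positivity of $a,b,c$ is harmless modulo $p$.
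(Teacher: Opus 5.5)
Your proof is correct and follows the paper's route: for odd $p$, choose $t$ with $p\nmid t(t+1)$ and apply Theorem~\ref{sum of squares}, and treat $p=2$ directly. Your $t=1$ already covers $p=3$, so that separate paragraph is redundant. Your explicit pull-back through the bijection for odd $p$, and your direct check of $T_aT_b+n\equiv c^2 \pmod 2$, fill in the ``as a result'' step that the paper leaves implicit, since at $p=2$ the paper's GPE solution $(0,0,0)$ cannot be transferred by dividing by $2$ and $16$.
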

\begin{proof}
    For $p = 2$, the solution $(0,0,0)$ works modulo $2$. For $p \neq 2$, choose any $t$ for which $p \nmid t(t+1)$. Then, Theorem \ref{sum of squares} leads to the result. 
\end{proof}
We are now done with the first step of the process. The result of Corollary \ref{squarefree} is enough to show that our equation for $D(n)$ triangular pairs has solutions in any $\mathbb{Z} / m \mathbb{Z}$ where $m$ is squarefree, using the Chinese Remainder Theorem. Now, we proceed with the second by attempting to "lift" our solutions to higher prime powers, using a multivariate form of Hensel's Lemma \cite{b8}.

\begin{theorem}\label{weak version}
    The equation $$T_a T_b + n = c^2$$ has solutions in $\mathbb{Z}/p^k\mathbb{Z}$, where $p > 3$, $n$ is fixed and $a,b,c$ are variable.
\end{theorem}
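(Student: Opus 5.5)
The plan is to obtain a solution modulo $p$ at which the defining polynomial has a nonvanishing partial derivative, and then lift it to $\mathbb{Z}/p^k\mathbb{Z}$ with the multivariate Hensel's Lemma \cite{b8}.

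To avoid the denominators in $T_a$, work with the polynomial
$$F(a,b,c) = a(a+1)b(b+1) + 4n - 4c^2,$$
which is $4(T_aT_b + n - c^2)$. Since $p>3$, the number $4$ is a unit modulo $p^k$, so a root of $F$ modulo $p^k$ is the same as a solution of $T_aT_b + n \equiv c^2 \pmod{p^k}$. Hensel's Lemma only needs a root $(a_0,b_0,c_0)$ modulo $p$ at which some partial derivative of $F$ is nonzero modulo $p$. The natural choice is
$$\frac{\partial F}{\partial c} = -8c,$$
which is nonzero modulo $p$ exactly when $p \nmid c_0$. Having a unit partial derivative in one variable, we can fix $a=a_0$ and $b=b_0$ and lift $c$ alone by the one-variable Hensel's Lemma. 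Equivalently, if $T_{a_0}T_{b_0} + n$ is a nonzero quadratic residue modulo $p$, it is a square modulo every $p^k$. Since $a$ and $b$ are then fixed integers, positivity of $a$ and $b$ is automatic, and $c$ can be taken positive by adding a multiple of $p^k$.

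The key step is therefore to choose $a_0,b_0$ with $T_{a_0}T_{b_0} + n$ a \emph{nonzero} square modulo $p$. First choose $a_0$ with $p \nmid T_{a_0}$; for example $a_0=1$ gives $T_1 = 1$. As $b$ ranges over $\mathbb{Z}/p\mathbb{Z}$, the values of $T_b = \frac{(2b+1)^2 - 1}{8}$ range over exactly $\frac{p+1}{2}$ residues, since $2b+1$ runs over all residues. Hence $T_{a_0}T_b + n$ also takes $\frac{p+1}{2}$ distinct values. We need one of them to lie among the $\frac{p-1}{2}$ nonzero squares.

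One route is to count solutions of $x^2 - Dy^2 \equiv N \pmod p$ with $x \neq 0$, where $D = T_{a_0}/\ldots$ is the appropriate coefficient, following the pigeonhole idea of Theorem \ref{sum of squares}. A simpler route exploits the freedom in $a$. Take $a_0 = 1$, so the expression is $T_b + n$. The task is to find $b$ with $\frac{(2b+1)^2 - 1}{8} + n$ a nonzero square, i.e. to find $u,v$ with
$$u^2 + 8n - 1 \equiv 8v^2 \pmod p, \qquad v \not\equiv 0 \pmod p.$$
This is a conic, $8v^2 - u^2 = 8n-1$. By the classical count, a nondegenerate conic of this shape has $p \pm 1$ points modulo $p$; if $8n-1 \equiv 0$ it degenerates, but that case is easy to handle separately. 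At most $2$ of these points have $v = 0$, so for $p > 3$ there is a point with $v \neq 0$. Alternatively, if $n \equiv 0 \pmod p$, simply take $a=b=1$ and $c=1$.

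I expect the main obstacle to be exactly this nondegeneracy: a bare application of Corollary \ref{squarefree} does not guarantee it, since it might hand us a root with $c_0 \equiv 0$. Handling it needs the point count above, with care at small primes; in particular $p=5$ should be checked directly. If the conic count becomes delicate, the fallback is to vary $a_0$ as well and apply the pigeonhole argument of Theorem \ref{sum of squares} with the value $0$ excluded.
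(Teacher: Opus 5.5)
Your reduction is sound, and it differs from the paper only in insisting on lifting $c$ alone. That requires a root modulo $p$ with $c_0\not\equiv 0$, and for $a_0=1$ your conic count supplies one whenever $8n\not\equiv 1\pmod p$: there are $p-\left(\frac{2}{p}\right)\ge 4$ points and at most two with $v=0$, so $p=5$ needs no separate check there.

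The gap is the degenerate case $8n\equiv 1\pmod p$. You call it easy but never treat it, and $a_0=1$ cannot handle it when $\left(\frac{2}{p}\right)=-1$. In that case $T_b+n\equiv (2b+1)^2/8$ is either $0$ or a non-residue, so every root has $c_0\equiv 0$. For example, take $p=5$, $n=2$: the residues of $T_b+2$ are $0,2,3$. In fact $T_b+2$ is never a square modulo $25$, because $5\mid c$ would force $25\mid T_b+2$, and then $(2b+1)^2=8(T_b+2)-15\equiv 10\pmod{25}$, which is impossible.

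Your fallback does not rescue this case. Pigeonholing the $\frac{p+1}{2}$ values against the $\frac{p-1}{2}$ nonzero squares forces no overlap, since the two sizes sum to exactly $p$. This example is precisely a case where the two sets are disjoint.

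The repair is short. In the degenerate case take $a_0=2$. Then $T_2=3\not\equiv 0\pmod p$, and $3T_b+n=v^2$ becomes $8v^2-3u^2=8n-3\equiv -2\not\equiv 0$. This is a nondegenerate conic with $p-\left(\frac{6}{p}\right)\ge 4$ points, at most two of which have $v=0$.

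Alternatively, do what the paper does and let Hensel's Lemma move any variable. For $f(t,x,y)=x^2-t(t+1)y^2+t(t+1)-16n$ at $t=1$, the partials are $3(1-y^2)$, $2x$ and $-4y$. These cannot all vanish modulo $p>3$, because $y\equiv 0$ and $y^2\equiv 1$ are incompatible. Only one nonzero component is needed, despite the paper's wording ``none of the components are $0$''. Hence every root supplied by Theorem~\ref{sum of squares} lifts, with no case split. In your example, the root $(a,b,c)=(1,2,0)$ of $F$ lifts by moving $a$, since $\partial F/\partial a=(2a+1)b(b+1)=18\not\equiv 0\pmod 5$.
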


\begin{proof}
    Consider the GPE form, $$x^2 - t(t+1) y^2 = 16n - t(t+1).$$ Consider the function $$f(t,x,y) = x^2 - t(t+1) y^2 + t(t+1) - 16n.$$ We want to set this equation to $0$ modulo powers of $p$. Using the Multivariable Form of Hensel's Lemma, since we already have solutions modulo $p$ for all primes $p$, we look at the partial derivatives for all $3$ variables: $$(\nabla f)(t,x,y) = ((2t+1)(1 - y^2), 2x, 2t(t+1) y).$$ For $p > 3$, we can choose $t = 1$ and $x,y$ accordingly (using Theorem \ref{sum of squares}) so that none of the components are $0$. Hence, using the Multivariable Hensel's Lemma, we arrive at our conclusion.
\end{proof}

Note that, due to the Chinese Remainder Theorem, Theorem \ref{weak version} extends to all $\mathbb{Z} / m \mathbb{Z}$ with $2, 3 \nmid m$.

\begin{theorem}
    The equation $T_a T_b + n = c^2$ has solutions $(a,b,c)$ in $\mathbb{Z} / 2^k \mathbb{Z}$, with $k \ge 1$, for all values of $n$.
\end{theorem}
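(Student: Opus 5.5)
The plan is to avoid Hensel's Lemma entirely, since at $p=2$ the gradient computed in Theorem~\ref{weak version} picks up factors of $2$ and the lifting argument breaks down. Instead I will show that triangular numbers hit \emph{every} residue class modulo $2^k$. Once that is known, the equation is trivially solvable. Take $b=1$, so that $T_b=1$, and pick any $c$, say $c=1$. It then suffices to find a positive integer $a$ with $T_a \equiv c^2 - n \pmod{2^k}$, which is exactly what surjectivity provides.

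The key step is the following lemma: the map $a \mapsto T_a \bmod 2^k$, restricted to $0 \le a \le 2^k - 1$, is injective and hence bijective onto $\mathbb{Z}/2^k\mathbb{Z}$. Suppose $T_a \equiv T_{a'} \pmod{2^k}$ with $0 \le a' < a \le 2^k-1$. Doubling gives
\[
a(a+1) - a'(a'+1) = (a-a')(a+a'+1) \equiv 0 \pmod{2^{k+1}}.
\]
The two factors $a-a'$ and $a+a'+1$ have opposite parities, because their sum is $2a+1$. Hence the odd factor contributes no power of $2$, and the even factor must be divisible by $2^{k+1}$. However, $0 < a - a' < 2^k$ and $0 < a+a'+1 \le 2^{k+1}-2 < 2^{k+1}$, so neither factor can be a positive multiple of $2^{k+1}$. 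This is a contradiction, so the map is injective. Since the domain and codomain both have $2^k$ elements, it is a bijection.

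Finally I must ensure that $a$ is positive, since the statement asks for positive integers. If the lemma produces $a=0$, meaning $c^2 - n \equiv 0$, I replace $a$ by $a + 2^{k+1}$. This is legitimate because
\[
T_{a+m} - T_a = ma + \tfrac{m(m+1)}{2},
\]
and with $m = 2^{k+1}$ this difference is divisible by $2^k$. The same shift also handles the case $k=1$ uniformly.

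I expect no real obstacle here. The only delicate point is the parity argument in the injectivity lemma, specifically checking the size bounds on $a-a'$ and $a+a'+1$ so that the power $2^{k+1}$ cannot divide the even factor. The rest is direct substitution.
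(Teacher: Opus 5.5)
Your proposal is correct and follows essentially the same route as the paper: prove that $a \mapsto T_a \bmod 2^k$ is a bijection from $\{0,\dots,2^k-1\}$ onto $\mathbb{Z}/2^k\mathbb{Z}$ using the opposite parity of $a-a'$ and $a+a'+1$, then take $T_b = 1$ and choose $a$ so that $T_a$ hits the required residue (the paper targets $c^2 = 4$, where you target $c^2 = 1$). Your size bounds on the two factors are stated more carefully than in the paper's write-up, and your shift $a \mapsto a + 2^{k+1}$ to force $a$ positive is a small addition that the paper leaves implicit.
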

\begin{proof}
    We prove a smaller claim first.
\\

    \textbf{Claim:} For $0 \le k \le 2^{m}-1$, $T_k$ modulo $2^m$ takes on every value in the set $\{0, 1, \dots, 2^m - 1\}$. 
    \\

    \emph{Proof:} Suppose that this is a contradiction, and that two values of $a,b$ exist for which $T_a \equiv T_b \pmod{2^m}$ and $a \neq b$. Then, it follows that $$\frac{a(a+1)}{2} - \frac{b(b+1)}{2} \equiv \frac{(a-b)(a+b+1)}{2} \equiv 0 \pmod{2^m}.$$ Note that $a-b$ and $a+b+1$ have opposite parity, so either $a \equiv b \pmod{2^{m+1}}$ or $a + b \equiv 2^{m+1} - 1 \pmod{2^{m+1}}$. The latter cannot be true since the maximum sum possible is $2 \left(2^{m} - 1 \right) = 2^{m+1} - 2$, hence $a \equiv b \pmod{2^m}$. So, $T_k$ has a unique residue modulo $2^m$ for $0 \le k \le 2^{m} - 1$, hence implying the conclusion. \quad\quad\quad\quad\quad\quad\quad\quad\quad\quad\quad\quad\quad\quad\quad\quad\quad $\square$
    \\ 
   
    Our claim makes the conclusion obvious, as $T_a T_b$ can take on any value. For example, set $T_a \equiv 4-n \pmod{2^k}$ and $T_b \equiv 1 \pmod{2^k}$, so that $T_a T_b + n \equiv (4-n) + n \equiv 2^2 \pmod{2^k}$. 
\end{proof}

This finally tells us gives our proof of Theorem \ref{big theorem n}, by using the Chinese Remainder Theorem on all $\mathbb{Z} / p^k \mathbb{Z}$ with $p \neq 3$ and $\mathbb{Z} / 3 \mathbb{Z}$. 
\\

Since we know solutions always exist modulo powers of a prime, apart from $p=3$, it is curious that we do not find any solutions for some $n \neq 2, 5 \pmod{9}$. It is certainly not impossible, but may provide further insight into this problem.

\subsection{Towards a full characterization}

Viewing this problem from a GPE perspective, lots of theory surrounding GPEs has been developed, often centered around \textbf{continued fractions}. Nevertheless a general criteria for telling whether a GPE has any integer solutions has not yet been found. Theorems and algorithms exist, however, for characterizing the structure of the set of solutions to a GPE. Of particular interest is a theorem from \cite{b10}.

\begin{theorem}\label{convergent}
If positive integers $x,y$ satisfy $x^2 - dy^2 = n$ with $|n| < \sqrt{d}$, then $x/y$ is a convergent of the continued fraction expansion of $\sqrt{d}$.   
\end{theorem}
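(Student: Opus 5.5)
The plan is to reduce the statement to Legendre's classical criterion from the theory of continued fractions: if $\alpha$ is irrational and $p/q$ is a rational with $q \ge 1$ and $\left|\alpha - p/q\right| < \frac{1}{2q^2}$, then $p/q$ is a convergent of $\alpha$. I would quote this criterion as standard and not reprove it. We may assume $d$ is not a perfect square, since otherwise $\sqrt{d}$ has no infinite continued fraction and the statement is vacuous or degenerate. For $n = 0$ there are no positive solutions at all. So I split into the cases $n > 0$ and $n < 0$.

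\emph{Case $n>0$.} Then $x > y\sqrt{d}$. Factor the equation as $(x - y\sqrt{d})(x + y\sqrt{d}) = n$, which gives
\[
0 < \frac{x}{y} - \sqrt{d} = \frac{n}{y\,(x + y\sqrt{d})}.
\]
Since $x > y\sqrt{d}$, we have $x + y\sqrt{d} > 2y\sqrt{d}$. Combining this with $n < \sqrt{d}$ gives
\[
\frac{x}{y} - \sqrt{d} < \frac{\sqrt{d}}{2y^2\sqrt{d}} = \frac{1}{2y^2},
\]
so Legendre's criterion applies directly.

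\emph{Case $n<0$.} Now $y\sqrt{d} > x$, and the same bound fails in this form. I would swap the roles of $x$ and $y$ and approximate $1/\sqrt{d}$ instead. From $dy^2 - x^2 = |n|$,
\[
0 < \frac{y}{x} - \frac{1}{\sqrt{d}} = \frac{y\sqrt{d} - x}{x\sqrt{d}} = \frac{|n|}{x\sqrt{d}\,(x + y\sqrt{d})}.
\]
Using $x + y\sqrt{d} > 2x$ and $|n| < \sqrt{d}$, this is less than $\frac{1}{2x^2}$. Hence $y/x$ is a convergent of $1/\sqrt{d}$. Because $\sqrt{d} > 1$, we have $\sqrt{d} = [a_0; a_1, a_2, \dots]$ with $a_0 \ge 1$ and $1/\sqrt{d} = [0; a_0, a_1, \dots]$. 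The convergents of $1/\sqrt{d}$ past the initial $0/1$ are exactly the reciprocals of the convergents of $\sqrt{d}$. Since $y/x > 0$, it follows that $x/y$ is a convergent of $\sqrt{d}$.

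Two details need care. First, Legendre's criterion is about the reduced fraction. If $g = \gcd(x,y) > 1$, replacing $(x,y)$ by $(x/g, y/g)$ leaves the rational number unchanged and only shrinks the denominator, so the inequality $\left|\alpha - p/q\right| < 1/(2q^2)$ still holds. The conclusion is then read as ``the rational number $x/y$ is a convergent''. Second, the reciprocal correspondence in the case $n<0$ is where the bookkeeping has to be done carefully. I expect this to be the only mildly delicate step: the main obstacle is simply noticing that for $n<0$ one must work with $1/\sqrt{d}$ rather than $\sqrt{d}$ to get the factor $2$ in the denominator.
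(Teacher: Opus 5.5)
Your proposal is correct and is essentially the argument the paper relies on. The paper gives no proof of its own and only points to Conrad's notes, which follow the standard Niven--Zuckerman--Montgomery proof: apply Legendre's $1/(2q^2)$ criterion to $x/y$ against $\sqrt{d}$ when $n>0$, and to $y/x$ against $1/\sqrt{d}$ when $n<0$, then use the reciprocal correspondence between the convergents of $1/\sqrt{d}=[0;a_0,a_1,\dots]$ and those of $\sqrt{d}$. Your handling of $\gcd(x,y)>1$ is right, and dismissing square $d$ is harmless: there $|n|<\sqrt{d}$ forces $n=0$ and $x/y=\sqrt{d}$, which is the only convergent.
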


\begin{proof}
    See K. Conrad's proof in \cite{b11}.
\end{proof}

Our generalized Pell Equation, of the form $$x^2 - t(t+1) y^2 = 16n - t(t+1),$$ does not always fit the parameter $|n| < \sqrt{d}$ in Theorem \ref{convergent}. However, we can make some conclusions that aid with this.
\\

\begin{theorem}\label{cf of triangular num}
The continued fraction expansion of $\sqrt{t(t+1)}$ is $$\sqrt{t(t+1)} = t + \frac{1}{2 + \frac{1}{2t + \frac{1}{2 + \frac{1}{2t +\dots}}}} = [t; 2, 2t, 2, 2t, \dots].$$
\end{theorem}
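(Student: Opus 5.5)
We will prove the expansion by running the standard continued fraction algorithm on $\alpha_0=\sqrt{t(t+1)}$ for a positive integer $t$, and showing that after two steps we return to a complete quotient of the form $t+\sqrt{t(t+1)}$. From that point on the expansion is periodic with period $(2,2t)$.

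The first step is the integer part. Since $t^2 < t(t+1) < t^2+t+\tfrac14 = (t+\tfrac12)^2$, we have $t < \sqrt{t(t+1)} < t+\tfrac12$. Hence $a_0=t$, and the fractional part satisfies $0<\sqrt{t(t+1)}-t<\tfrac12$.

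Next we compute the complete quotients by rationalizing. First,
\[
\alpha_1=\frac{1}{\sqrt{t(t+1)}-t}=\frac{\sqrt{t(t+1)}+t}{t(t+1)-t^2}=\frac{\sqrt{t(t+1)}+t}{t}.
\]
By the bounds above, $\alpha_1$ lies strictly between $2$ and $2+\tfrac{1}{2t}$, which is less than $3$. So $a_1=2$ and $\alpha_1-2=\frac{\sqrt{t(t+1)}-t}{t}$. Then
\[
\alpha_2=\frac{t}{\sqrt{t(t+1)}-t}=\frac{t(\sqrt{t(t+1)}+t)}{t}=\sqrt{t(t+1)}+t.
\]
Its integer part is $a_2=2t$, again because $t<\sqrt{t(t+1)}<t+1$. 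Moreover $\alpha_2-2t=\sqrt{t(t+1)}-t=\alpha_0-a_0$. Therefore $\alpha_3=\alpha_1$, and by induction $\alpha_{2k+1}=\alpha_1$ and $\alpha_{2k}=\alpha_2$ for all $k\ge1$. This gives $\sqrt{t(t+1)}=[t;\overline{2,2t}]$.

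The only delicate point is the floor computations. They all rest on the single inequality $t<\sqrt{t(t+1)}<t+\tfrac12$, which we justified at the start.

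A possible alternative is to check directly that $x=[t;\overline{2,2t}]$ satisfies $x=t+\frac{1}{2+\frac{1}{t+x}}$, which reduces to $x^2=t(t+1)$ with $x>0$. We prefer the algorithmic route because it also yields the complete quotients, which are useful later with Theorem~\ref{convergent}.
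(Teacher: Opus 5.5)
Your argument is correct and complete, but it does not follow the paper's route. The paper's proof is exactly the alternative you sketch at the end. It names the value $a=[t;2,2t,2,2t,\dots]$ and uses that the tail $[2t;2,2t,\dots]$ equals $a+t$ to get the self-similarity relation $a=t+\frac{1}{2+\frac{1}{a+t}}$. It then clears denominators to reach $2a^2=2t^2+2t$ and takes the positive root.

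That verification is shorter, but it presupposes the shape of the expansion. It also silently relies on three facts: convergence of the infinite periodic continued fraction, $a>0$, and uniqueness of the infinite simple continued fraction of an irrational number. Uniqueness is what lets one pass from ``this continued fraction has value $\sqrt{t(t+1)}$'' to ``this is the expansion of $\sqrt{t(t+1)}$''.

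Your algorithmic computation instead derives the expansion. It needs only the single inequality $t<\sqrt{t(t+1)}<t+\tfrac12$, together with the standard fact that the continued fraction algorithm's output converges to its input. It also produces the complete quotients $\alpha_1=(\sqrt{t(t+1)}+t)/t$ and $\alpha_2=\sqrt{t(t+1)}+t$ explicitly.

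Their denominators $t$ and $1$ are, up to sign, the values of $x^2-t(t+1)y^2$ at the convergents $t/1$ and $(2t+1)/2$, namely $-t$ and $1$. By periodicity these are the only values taken at any convergent. That is precisely what the paper needs right afterwards, when it combines Theorem~\ref{convergent} with the bound on $y_1$ to obtain Theorem~\ref{existence less than sqrt}. So your closing remark about the usefulness of the complete quotients is well founded.
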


\begin{proof}
    Let $a =t +\frac{1}{2 + \frac{1}{2t + \frac{1}{2 + \frac{1}{2t +\dots}}}}$. Then, observe that $$a = t + \frac{1}{2 + \frac{1}{a+t}} = t + \frac{1}{\frac{2a+2t+1}{a+t}} = t + \frac{a+t}{2a+2t+1} = \frac{2at + 2t^2 + a + 2t}{2a+2t+1}.$$ This can be rearranged as $$2a^2 = 2t^2 + 2t \iff a^2 = t^2 + t \iff a = \sqrt{t(t+1)},$$ as desired. 
\end{proof}

Observe the first few convergents of the continued fraction expansion of $\sqrt{t(t+1)}$: $$\frac{t}{1}, \frac{2t+1}{2}, \frac{4t^2 + 3t}{4t+1}, \dots$$ Now consider the equation $x^2 - t(t+1)y^2 = N$ for $|N| \le t$. From our bound on $|y_1|$ in Theorem \ref{Bounding Theorem}, note that, to have any solution to $x^2 - t(t+1)y^2 = N$, we must have a solution $(x_1, y_1)$ with

\begin{align*} |y_1| &\le \frac{\sqrt{|N|}(1 + \sqrt{2t+1 + 2\sqrt{t(t+1)}})}{2 \sqrt{t(t+1)}} \\
&\le \frac{\sqrt{t}\left(1 + \sqrt{2t+1 + 2\sqrt{t(t+1)}}\right)}{2 \sqrt{t(t+1)}} \\
&= \frac{1 + \sqrt{2t+1 + 2\sqrt{t(t+1)}}}{2 \sqrt{t+1}} \\
&= \frac{1 + \sqrt{2t+1 + 2\sqrt{t(t+1)}}}{\sqrt{4t + 4}} \\
&< \frac{1 + \sqrt{2t+1 + 2t + 1}}{\sqrt{4t+4}} = \frac{1 + \sqrt{4t+2}}{\sqrt{4t+4}}. 
\end{align*}

For $t \ge 1$, this implies that $|y_1| < 2$, hence $y_1 = 0, 1$. (We don't consider $-1$ as it provides no new solution.) But we also know that $\frac{x_1}{y_1}$ is a continued fraction convergent of $\sqrt{t(t+1)}$; thus we must have $$x_1 = t, y_1 = 1 \implies x^2 - t(t+1)y^2 = t^2 - t(t+1) = -t.$$ $y_1 =0$ is a special case, which yields that $N$ must be a perfect square. These two observations lead to the following theorem: 

\begin{theorem}\label{existence less than sqrt}
    Consider the equation $x^2 - t(t+1)y^2 = N$ for positive integer $t$ and integer $N$ satisfying $|N| \le t$. Then, an integer solution $(x,y)$ exists if and only if $N = -t$ or $N = a^2$ for some integer $a$.
\end{theorem}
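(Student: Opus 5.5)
The plan is to prove the two directions separately. The ``if'' direction is immediate. If $N=a^2$, then $(x,y)=(a,0)$ is a solution. If $N=-t$, then $(x,y)=(t,1)$ is a solution, since $t^2-t(t+1)=-t$. This also covers $N=0=0^2$, so from now on I may assume $N\neq 0$, which Theorem \ref{Bounding Theorem} requires.

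For the ``only if'' direction, I will apply Theorem \ref{Bounding Theorem} with $d=t(t+1)$ and the norm-one element $u=2t+1+2\sqrt{t(t+1)}$ from Lemma \ref{unit}. This gives that if any solution exists, then one exists with $|y_1|$ bounded as in (\ref{Bounded}). The computation displayed just before the statement then gives
\[
|y_1| < \frac{1+\sqrt{4t+2}}{\sqrt{4t+4}}.
\]
I will check that this is less than $2$ for all $t\ge 1$: the numerator is below $1+\sqrt{4t+4}$, and $1/\sqrt{4t+4}\le 1/\sqrt 8<1$. Replacing $y_1$ by $-y_1$ if necessary, I get $y_1\in\{0,1\}$.

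Next I would do the case analysis directly, rather than relying on the continued fraction statement (Theorem \ref{convergent}). If $y_1=0$, then $N=x_1^2$ is a perfect square. If $y_1=1$, then $x_1^2=N+t(t+1)$, and the hypothesis $|N|\le t$ gives
\[
t^2\le x_1^2\le t^2+2t<(t+1)^2 .
\]
So $x_1^2=t^2$, which forces $N=-t$. This completes the converse.

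The only delicate step is making sure the bound from Theorem \ref{Bounding Theorem} is applied correctly. That means using $|N|\le t$ inside the square root, the strict inequality $\sqrt{t(t+1)}<t+\tfrac12$, and the fact that the theorem produces \emph{some} small solution whenever any solution exists, which is exactly what the contrapositive needs. Once $|y_1|\le 1$ is secured, the squeeze between consecutive squares finishes the argument without any appeal to convergents.
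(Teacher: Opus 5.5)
Your proof is correct. It shares its first half with the paper and differs in the final step.

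\textbf{Shared part.} Like the paper, you apply Theorem~\ref{Bounding Theorem} with the unit of Lemma~\ref{unit}. You then use $|N|\le t$ and $\sqrt{t(t+1)}<t+\tfrac12$ to get $|y_1|\le 1$.

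\textbf{Where the routes diverge.} In the case $y_1=1$, the paper appeals to Theorem~\ref{convergent}, which applies since $|N|\le t<\sqrt{t(t+1)}$. Combined with the convergents read off from Theorem~\ref{cf of triangular num}, this shows $x_1/1$ must be the convergent $t/1$. You instead observe that $x_1^2=N+t(t+1)$ lies in $[t^2,\,t^2+2t]$, an interval that contains exactly one perfect square.

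\textbf{What each route buys.} Your squeeze is elementary and self-contained. It shows that the continued-fraction machinery is unnecessary once $|y_1|\le 1$ is known. It also avoids checking the hypotheses of Theorem~\ref{convergent} (positive $x,y$) and identifying which convergent has denominator $1$. You are also more careful than the paper in three places:
\begin{itemize}
\item you treat $N=0$ separately, a case Theorem~\ref{Bounding Theorem} excludes through its hypothesis $n\neq 0$;
\item you verify that $\frac{1+\sqrt{4t+2}}{\sqrt{4t+4}}<2$ (indeed it is $<1+1/\sqrt{8}$), which the paper only asserts;
\item you write out the trivial ``if'' direction.
\end{itemize}
The paper's route connects the result to the continued-fraction viewpoint used in its later discussion of Lagrange's reductions, but it adds nothing for this particular statement.
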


As mentioned, this is not so useful for our purposes yet, since our GPE is of the form $$x^2 - t(t+1)y^2 = 16n - t(t+1),$$ and $16n - t(t+1) \not\le t.$ However, \textbf{Lagrange's system of reductions}, as described in \cite{b9}, allows us to reduce our GPE. we describe the algorithm below.

\begin{algorithm}
    Suppose we have $x^2 - Dy^2 = N$ for $N^2 > D$. Note that the condition $k^2 \equiv D \pmod{|N|}$ is necessary, for some value of $k$, as $$x^2 - Dy^2 \equiv 0 \pmod{|N|} \iff D \equiv \left(\frac{y}{x} \right)^2 \pmod{|N|}.$$ Moreover, such a $k$ must exist for which $0 \le k \le |N|/2$. For all $k$ in this range satisfying $k^2 \equiv D \pmod{|N|}$, one can reduce the equation $x^2 - Dy^2 = N$ to the equation $$x^2 - Dy^2 = \frac{k^2 - N}{D} = h$$ Then, the process is repeated again until $h^2 < D$. So, in this way, we get multiple "branches" of equations from a single equation $x^2 - Dy^2 = N$. If one of these branches has a solution, the original solution to the equation can be found by back substitution (though we need not worry about this aspect, for we are only testing for existence). If none of the branches have a solution, the original equation $x^2 - Dy^2 = N$. does not either.
\end{algorithm}

\begin{example}
    Consider the GPE $x^2 - 2y^2 = 94$, which is our characteristic GPE with $t = 1$ and $n = 6$. Note that $$40^2 \equiv 2 \pmod{94},$$ so one possible "branch" that can be reached is from taking the new GPE $$x^2 - 2y^2 = \frac{40^2 - 2}{94} = 17.$$ Once again, $6^2 \equiv 2 \pmod{17}$ (this being the solution between $0$ and $17/2$), so our equation becomes $$x^2 - 2y^2 = \frac{6^2 - 2}{17} = 2.$$ One more reduction can be performed using $0^2 \equiv 2 \pmod{2}$, to get $$x^2 - 2y^2 = \frac{0^2 - 2}{2} = -1.$$ This has a solution from Theorem \ref{existence less than sqrt}, as $N = -t = -1$. In this way, we see that there exists a $D(6)$ triangular pair.
\end{example}

In this example, we see how useful it is to have Theorem \ref{existence less than sqrt}, for we can stop immediately once we reach an equation $x^2 - t(t+1)y^2 = N$ with $|N| \le t$. It's only necessary for us to perform Lagrange's System of Reductions.
\\

Making this into a general criteria or algorithm of existence for all values of $t$ (assuming we fix $D(n)$) is, of course, much more difficult. Two key steps are involved: determining whether $k^2 \equiv D \pmod{|N|}$ has a solution, and finding all such solutions. In the context of the first step of our GPE, this means determining whether $$k^2 \equiv t(t+1) \pmod{|16n - t(t+1)|}$$ has a solution, and finding it. For some sufficiently large value of $t$ it follows that $t(t+1) > 16n$, so we can rewrite this as $$k^2 \equiv t(t+1) \pmod{t(t+1) - 16n} \iff k^2 \equiv -16n \pmod{t(t+1) - 16n}.$$ So, the problem, in part, comes down to determining for what values of $t$ is $-16n$ a quadratic residue modulo $t(t+1) - 16n$, for fixed $n$.  
\\

The discussion is rather hard to continue without choosing a fixed $n$; suppose $n = 12$, the first nontrivial $n$ for which we did not find $D(n)$ triangular pairs. Then, we must determine for which $t$ is $-192$ a quadratic residue modulo $t(t+1) - 192)$. One way to do this using the Chinese Remainder Theorem. For that, we need the following well-known theorem:
\\

\begin{theorem}\label{powers of 2}
    The equation $x^2 \equiv a \pmod{2^j}$ has solutions if and only if $a = 0$ or $a = 4^k (8b+1)$, for some nonnegative integers $b$ and $k$.
\end{theorem}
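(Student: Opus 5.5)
The plan is to reduce to the case where $a$ is odd, settle that case by an explicit lifting argument, and then pass to general $a$ by factoring out the power of $2$. Throughout, I read the statement as a condition on the residue class of $a$ modulo $2^j$. That is, the claim is that $x^2\equiv a \pmod{2^j}$ is solvable if and only if $a\equiv 0$ or $a\equiv 4^k(8b+1)\pmod{2^j}$ for some $k,b\ge 0$. Under this reading the small cases $j\le 2$ are absorbed: any representative of the class may be used, so for instance every odd class modulo $2$ contains a number $\equiv 1\pmod 8$. I will check these small cases ($j\le 3$) by direct enumeration of the squares modulo $2,4,8$.

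\emph{Odd $a$.} For necessity, I will use the fact that every odd $x$ satisfies $x^2=(2m+1)^2=4m(m+1)+1\equiv 1\pmod 8$, since $m(m+1)$ is even. Hence for $j\ge 3$ an odd square class must be $\equiv 1\pmod 8$. For sufficiency, I will induct on $i\ge 3$, starting from $1^2\equiv a\pmod 8$. Suppose $x$ is odd and $x^2\equiv a\pmod{2^i}$, so $x^2=a+2^i r$.
\begin{align*}
(x+2^{i-1})^2 &= x^2+2^i x+2^{2i-2}\\
&\equiv a+2^i(r+x)\pmod{2^{i+1}},
\end{align*}
where the last step uses $2i-2\ge i+1$ for $i\ge 3$. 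Since $x$ is odd, exactly one of $r$ and $r+x$ is even. So either $x$ or $x+2^{i-1}$ is a square root of $a$ modulo $2^{i+1}$. This replaces the multivariable Hensel's Lemma used earlier, which fails at $p=2$ because the derivative $2x$ is never a unit.

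\emph{General $a$.} Write a nonzero residue as $a\equiv 2^e u\pmod{2^j}$ with $u$ odd and $e<j$. For necessity, suppose $x^2\equiv a\pmod{2^j}$. Then $v_2(x^2)=e$, because $v_2(a)=e<j$. So $e=2k$ is even, and writing $x=2^k y$ with $y$ odd gives $y^2\equiv u\pmod{2^{j-2k}}$. By the odd case, $u\equiv 1\pmod 8$ whenever $j-2k\ge 3$. When $j-2k\le 2$ we have $2^{j-2k}\mid 8$, so we may replace $u$ by $u+t\,2^{j-2k}$ without changing the class of $a$ and make $u\equiv1\pmod 8$. This gives $a\equiv 4^k(8b+1)$. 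For sufficiency, if $a\equiv 4^k(8b+1)$, I will take a square root $y$ of $8b+1$ modulo $2^{j}$ from the odd case and set $x=2^k y$. The class $a\equiv 0$ is trivial, with $x=0$.

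\emph{Main obstacle.} The lifting step for odd $a$ is the only real content, and I expect the main obstacle to be keeping the off-by-one bookkeeping of exponents consistent in the boundary cases $j-2k\le 2$. There the congruence condition modulo $8$ is vacuous, and one must argue at the level of residue classes rather than integers.
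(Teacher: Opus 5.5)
The paper does not prove this statement: it quotes it as ``well-known'' and then applies it with $a=-192$. So there is no argument of the authors' to compare against. Your proposal supplies the missing proof, and it is correct.

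Reading the condition on $a$ as a condition on its residue class modulo $2^j$ is not just convenient but necessary. Read literally for integers $a$, the ``only if'' direction is false: $x^2\equiv 3\pmod 2$, $x^2\equiv 5\pmod 4$ and $x^2\equiv 8\pmod 8$ are all solvable, yet $3,5,8$ are neither $0$ nor of the form $4^k(8b+1)$. The paper's own application also only makes sense under your reading, since $-192<0$ yet $-192\equiv 8^2\pmod{256}$.

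The rest of the argument is sound:
\begin{itemize}
\item The explicit lift (keep $x$ or pass to $x+2^{i-1}$ according to the parity of $r$, using $2i-2\ge i+1$ for $i\ge 3$) is correct. It is the standard way around the failure of the simple Hensel lemma at $p=2$ that you point out.
\item The valuation argument ($e=v_2(x^2)<j$ forces $e=2k$, then divide by $4^k$) correctly reduces the general case to the odd one.
\end{itemize}

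One step should be stated more carefully. In the boundary case $j-2k\le 2$, the divisibility $2^{j-2k}\mid 8$ does not by itself let you move $u$ into the class $1\pmod 8$ by adding multiples of $2^{j-2k}$. When $j-2k=2$ this is possible only if $u\equiv 1\pmod 4$; a $u\equiv 3\pmod 4$ could never be moved. What you need, and already have, is $u\equiv y^2\equiv 1\pmod{2^{j-2k}}$. This follows from $y$ odd, $y^2\equiv 1\pmod 8$ and $2^{j-2k}\mid 8$. You may then simply take $u=1$, giving $a\equiv 4^k\pmod{2^j}$. So the condition in that case is not vacuous, as your closing paragraph says, but automatically satisfied.

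Similarly, choose the representative of $a$ in $[1,2^j-1]$, so that $u>0$ and $b=(u-1)/8\ge 0$.
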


Suppose now that $t(t+1) - 192 = 2^{e} p_1^{e_1} \dots p_i^{e_i}$ for odd primes $p_1, \dots, p_k$. It then follows that $k^2 \equiv -192 \pmod{t(t+1) - 192}$ has a solution if and only if each of the congruences 
\begin{align*}
k_{0}^2 &\equiv -192 \pmod{2^e}, \\
k_{1}^2 &\equiv -192 \pmod{p_1^{e_1}}, \\
&\text{   } \vdots \\
k_{i}^2 &\equiv -192 \pmod{p_i^{e_i}}
\end{align*}
have solutions. In fact, from Hensel's Lemma, it suffices to check if the set of congruences 
\begin{align*}
k_{0}^2 &\equiv -192 \pmod{2^e}, \\
k_{1}^2 &\equiv -192 \pmod{p_1}, \\
&\text{   } \vdots \\
k_{i}^2 &\equiv -192 \pmod{p_i}
\end{align*}
has solutions. The first condition we can handle with Theorem \ref{powers of 2}. For the remaining congruences, note that $$k_j^2 \equiv -192 \pmod{p_i} \iff \left(\frac{k_j}{8} \right)^2 \equiv -3 \pmod{p_i},$$ so it really suffices to check whether $-3$ is quadratic residue modulo $p_j$ for $1 \le j \le i$. But this is quite simple; in fact, it is just a result of quadratic reciprocity: $$\left(\frac{-3}{p} \right) = \begin{cases} 
      1 & \text{ if } p \equiv 1, 7 \pmod{12} \\
      -1 & \text{ if } p \equiv 5, 11 \pmod{12}
   \end{cases}
$$

Thus, it follows that $t(t+1) - 192$ can only have prime factors congruent to $1, 7$ modulo $12$. 
\\

One should also not forget about the edge case $p = 3$; in fact, $$k^2 \equiv -3 \pmod{3^j}$$ has no solutions for $j > 1$. So, it follows that $192 - t(t+1)$ cannot be divisible by $9$.
\\

When combining these facts using Theorem \ref{powers of 2}, this gives a necessary and sufficient condition for determining if $-192$ is a quadratic residue modulo $192 - t(t+1)$, given the prime factorization of $t(t+1) - 192$.
\\

Finding a general prime factorization, or determining what primes divide $t(t+1) - 192$, is a another difficult task. Determining the general decomposition of a quadratic in $\mathbb{F}_p$ is a task currently being looked at as an avenue. We do believe that this may produce promising results in the future (not only for $D(12)$, but for general $D(n)$). 

\section{Acknowledgments}
We'd like to thank our research mentor, Dr. Simon Rubinstein-Salzedo, for introducing us to this topic, guiding us throughout the research process, holding weekly meetings for discussion, and reviewing drafts of our paper. We'd also like to thank Evan Chen for providing feedback on the final draft of this paper. Finally, we'd like to thank the Euler Circle community for being extremely supportive and providing help whenever necessary.

\end{document}